\newtheorem{thm}{Theorem}[section]
\newtheorem{cor}[thm]{Corollary}
\newtheorem{lem}[thm]{Lemma}
\newtheorem{prop}[thm]{Proposition}
\theoremstyle{definition}
\newtheorem{rem}[thm]{Remark}
\numberwithin{equation}{section}
\begin{document}

\title{Asymmetric truncated Toeplitz operators on finite-dimensional spaces}
\author[Joanna Jurasik]{Joanna Jurasik}%\\
\address{Department of Mathematics, Maria Curie-Sk\l odowska University, Maria Curie-Sk\l odowska Square 1, 20-031 Lublin, Poland}%
\email{asia.blicharz@op.pl}%
\author[Bartosz \L anucha]{Bartosz \L anucha}%\\
\address{Department of Mathematics, Maria Curie-Sk\l odowska University, Maria Curie-Sk\l odowska Square 1, 20-031 Lublin, Poland}%
\email{bartosz.lanucha@poczta.umcs.lublin.pl}%

%Department of Mathematics, Maria Curie-Sklodowska University\\
%20-031 Lublin, Poland\\
%E-mail: bartosz.lanucha@gmail.com}

\begin{abstract}
In this paper we consider asymmetric truncated Toeplitz operators acting between two finite-dimensional model spaces. We compute the dimension of the space of all such operators. We also describe the matrix representations of asymmetric truncated
Toeplitz operators acting between two finite-dimensional model spaces. Our results are generalizations of the results known for truncated Toeplitz operators.
\end{abstract}

\maketitle

\renewcommand{\thefootnote}{}

\footnote{2010 \emph{Mathematics Subject Classification}: 47B32,
47B35, 30H10.}

\footnote{\emph{Key words and phrases}: model spaces, truncated
Toeplitz operators, asymmetric truncated
Toeplitz operators, matrix representations.}

\renewcommand{\thefootnote}{\arabic{footnote}}
\setcounter{footnote}{0}

\section{Introduction}

Let $H^2$ be the classical Hardy space of the unit disk
$\mathbb{D}=\{z\colon|z|<1\}$. The Hardy space $H^2$ consists of functions
$f(z)=\sum_{k=0}^{\infty}\hat{f}(k)z^k$ analytic in $\mathbb{D}$ and
such that
$$\|f\|^2=\sum_{k=0}^{\infty}|\hat{f}(k)|^2<\infty.$$

As usual, $H^2$ will be identified via boundary values with a closed
subspace of $L^2(\partial\mathbb{D})$. %, namely the subspace of
%functions whose Fourier coefficients with negative indices vanish.
The orthogonal projection from $L^2(\partial\mathbb{D})$ onto $H^2$
will be denoted by $P$. The Szeg\"{o} projection $P$ is an integral operator given by
$$Pf(z)=\frac{1}{2\pi}\int_0^{2\pi}\frac{f(e^{it})dt}{1-e^{-it}z},\quad z\in \mathbb{D}.$$
Note that the above integral defines a function $Pf$ analytic in $\mathbb{D}$ for every $f\in L^1(\partial\mathbb{D})$.

The classical Toeplitz operator $T_{\varphi}$ with symbol
$\varphi\in L^2(\partial\mathbb{D})$ is defined on $H^2$ by
$$T_{\varphi}f=P(\varphi f).$$
The operator $T_{\varphi}$ is bounded if and only if $\varphi\in
L^{\infty}(\partial\mathbb{D})$. An important example of a classical Toeplitz
operator is the unilateral shift $S=T_z$, that is, $Sf(z)=zf(z)$. Its Hilbert space adjoint $S^{*}$,
called the backward shift, is given by
$$S^{*}f(z)=\frac{f(z)-f(0)}{z}.$$

Let $H^{\infty}$ be the algebra of bounded analytic functions on
$\mathbb{D}$ and let $\alpha$ be an arbitrary inner function, that
is, $\alpha\in H^{\infty}$ and $|\alpha|=1$ a.e. on $\partial
\mathbb{D}$. The model space corresponding to $\alpha$ is the closed
subspace $K_{\alpha}$ of $H^2$ of the form
$$K_{\alpha}=H^2\ominus \alpha H^2.$$
The theorem of A. Beurling (see for instance \cite[Thm.
8.1.1]{r}) implies that every nontrivial $S^{*}$-invariant subspace of $H^2$ is a model space $K_{\alpha}$ corresponding to some inner function $\alpha$.

The model space $K_{\alpha}$ is a reproducing kernel Hilbert space
with the kernel function given by
\begin{equation}\label{kerker}
k_{w}^{\alpha}(z)=\frac{1-\overline{\alpha(w)}\alpha(z)}{1-\overline{w}z},\quad w, z\in\mathbb{D}.
\end{equation}
In other words, $f(w)=\langle f, k_{w}^{\alpha}\rangle$ for every
$f\in K_{\alpha}$ and $w\in\mathbb{D}$ (here $\langle
\cdot,\cdot\rangle$ is the usual integral inner product). Note that
$k_{w}^{\alpha}$ is bounded, and so the subspace
$K_{\alpha}^{\infty}$ of all bounded functions in $K_{\alpha}$ is
dense in $K_{\alpha}$. If $\alpha(w)=0$, then $k_{w}^{\alpha}$ is equal to the Szeg\"{o} kernel $k_w(z)=(1-\overline{w}z)^{-1}$.

We say that $\alpha$ has an angular derivative in the sense of Carath\'{e}odory (an ADC) at the point $\eta\in\partial\mathbb{D}$ if $\alpha$ has a nontangential limit $\alpha(\eta)$ at $\eta$ ($\alpha(z)$ tends to $\alpha(\eta)$ as $z$ tends to $\eta$ with $|z-\eta|/(1-|z|)$ bounded), $|\alpha(\eta)|=1$ and the difference quotient $(\alpha(z)-\alpha(\eta))/(z-\eta)$ has a nontangential limit at $\eta$ (for more details see \cite[pp. 33--37]{bros}). It was proved by P.R. Ahern and D.N. Clark in \cite{clark,cclark} that $\alpha$ has an ADC at some point $\eta\in\partial\mathbb{D}$ if and only if every $f\in K_{\alpha}$ has a nontangential limit $f(\eta)$ at $\eta$. If $\alpha$ has an ADC at $\eta$, then the function $k_{\eta}^{\alpha}$ defined by \eqref{kerker} with $\eta$ in place of $w$, belongs to $K_{\alpha}$. Moreover, $f(\eta)=\langle f, k_{\eta}^{\alpha}\rangle$ for every $f\in K_{\alpha}$ and $k_{w}^{\alpha}\rightarrow k_{\eta}^{\alpha}$ (in norm) as $w$ tends to $\eta$ nontangentially. Note that
\begin{equation*}
\|k_{\eta}^{\alpha}\|^2=k_{\eta}^{\alpha}(\eta)=|\alpha'(\eta)|.
\end{equation*}

The orthogonal projection
$P_{\alpha}$ from $L^2(\partial\mathbb{D})$ onto $K_{\alpha}$ is
given by $$P_{\alpha}f(z)=\langle f,k_{z}^{\alpha}\rangle,\quad  z\in\mathbb{D}.$$ Like the Szeg\"{o} projection $P$, $P_{\alpha}$ is an integral operator and $P_{\alpha}f$ is a well defined analytic function for all $f\in L^1(\partial\mathbb{D})$.

If $\alpha$ is an inner function, then the formula
\begin{equation}\label{numerek3}
C_{\alpha}f(z)=\alpha(z)\overline{z}\overline{f(z)},\quad |z|=1,
\end{equation}
defines a conjugation on $L^2(\partial\mathbb{D})$. In other words, $C_{\alpha}\colon L^2(\partial\mathbb{D})\rightarrow L^2(\partial\mathbb{D})$ is an antilinear, isometric involution
(see \cite[Subection 2.3]{s}). Actually, it can be
verified that $C_{\alpha}$ maps $\alpha H^2$
onto $\overline{H_0^2}$, $\overline{H_0^2}$ onto $\alpha H^2$, and
preserves $K_{\alpha}$. For $f\in K_{\alpha}$ we will write $\widetilde{f}$ in place of
$C_{\alpha}f$ when no confusion can arise. A simple calculation reveals that the conjugate kernel
$\widetilde{k}_{w}^{\alpha}$ is given by
$$\widetilde{k}_{w}^{\alpha}(z)=\frac{\alpha(z)-\alpha(w)}{z-w},\quad w,z\in\mathbb{D}.$$
Moreover, if $\eta\in\partial\mathbb{D}$ and $k_{\eta}^{\alpha}\in K_{\alpha}$, then
\begin{equation}\label{kerk}
\widetilde{k}_{\eta}^{\alpha}(z)=\frac{\alpha(z)-\alpha(\eta)}{z-\eta}=\alpha(\eta)\overline{\eta}k_{\eta}^{\alpha}(z).
\end{equation}

 A truncated Toeplitz operator $A_{\varphi}^{\alpha}$ with a symbol $\varphi\in
L^2(\partial\mathbb{D})$ is the compression of $T_{\varphi}$ to the model space $K_{\alpha}$. More precisely, $A_{\varphi}^{\alpha}$ is defined on $K_{\alpha}$ by
$$A_{\varphi}^{\alpha}f=P_{\alpha}(\varphi f).$$

An extensive study of truncated Toeplitz operators began in 2007
with D. Sarason's paper \cite{s}. Despite similar definitions, truncated Toeplitz
operators differ form the classical ones in many ways. For example, $T_{\varphi}=0$ if and only if $\varphi=0$, but $A_{\varphi}^{\alpha}=0$ if and only if $\varphi\in \overline{\alpha H^2}+\alpha H^2$ (see \cite[Thm. 3.1]{s}). Moreover, unlike in the classical case, unbounded symbols can produce bounded truncated Toeplitz operators
and there are bounded truncated Toeplitz operators for which no
bounded symbol exists (see \cite{bar} for more details). More
interesting results about truncated Toeplitz operators can be found in \cite{si,bros, gar,gar2,gar3}.

Recently, the authors in \cite{ptak} and \cite{part} introduced the so-called asymmetric truncated
Toeplitz operators, which are generalizations of truncated Toeplitz operators. Let $\alpha$, $\beta$ be two inner functions and let $\varphi\in L^2(\partial\mathbb{D})$. An asymmetric
truncated Toeplitz operator $A_{\varphi}^{\alpha,\beta}$ with a symbol $\varphi\in
L^2(\partial\mathbb{D})$ is the
operator from $K_{\alpha}$ into $K_{\beta}$ defined by
$$A_{\varphi}^{\alpha,\beta}f=P_{\beta}(\varphi f),\quad f\in K_{\alpha}.$$

The asymmetric truncated Toeplitz operator
$A_{\varphi}^{\alpha,\beta}$ is closed and densely defined. Obviously, $A_{\varphi}^{\alpha,\alpha}=A_{\varphi}^{\alpha}$.

Let
$$\mathcal{T}(\alpha,\beta)=\{A_{\varphi}^{\alpha,\beta}\ \colon\ \varphi\in
L^2(\partial\mathbb{D})\ \mathrm{and}\ A_{\varphi}^{\alpha,\beta}\
\mathrm{is\ bounded}\}$$
and $\mathcal{T}(\alpha)=\mathcal{T}(\alpha,\alpha)$.

In 2008 \cite{w} J.A. Cima, W.T. Ross and W.R. Wogen considered truncated Toeplitz operators on finite-dimensional model spaces. It is known that $K_{\alpha}$ has finite dimension $m$ if and only if $\alpha$ is a finite Blaschke product of degree $m$. In that case every $f\in K_{\alpha}$ is analytic in a domain containing the closed unit disk  (see \cite[Prop. 5.7.6]{bros}). If $\alpha$ has $m$ distinct zeros $a_1,\ldots,a_m$, then the sets $\{k_{a_1}^{\alpha},\ldots,k_{a_m}^{\alpha}\}$ and $\{\widetilde{k}_{a_1}^{\alpha},\ldots,\widetilde{k}_{a_m}^{\alpha}\}$ are two (non-orthonormal) bases for $K_{\alpha}$. The authors in \cite{w} characterized the operators from $\mathcal{T}(\alpha)$ in terms of the matrix representations with respect to each of these bases. They showed that a matrix representing a truncated Toeplitz operator on $m$-dimensional model space is completely determined by $2m-1$ of its entries, those along the main diagonal and the first row (and the first row can be replaced by any other row or column). They also proved a similar result for the so-called Clark bases.

The main purpose of this paper is to generalize the results from \cite{w} to the case of asymmetric truncated Toeplitz operators.

In Section 2 we compute the dimension of $\mathcal{T}(\alpha,\beta)$ for two finite Blaschke products $\alpha$, $\beta$. D. Sarason \cite[Thm. 3.1]{s} proved that if $\alpha$ is a finite Blaschke product of degree $m>0$, then the dimension of $\mathcal{T}(\alpha)$ is $2m-1$. We show that if $\alpha$ and $\beta$ are finite Blachke products of degree $m>0$ and $n>0$, respectively, then the dimension of $\mathcal{T}(\alpha,\beta)$ is $m+n-1$.

In Section 3 we generalize the results from \cite{w} concerning matrix representations. We characterize matrix representations of asymmetric truncated Toeplitz operators acting between finite-dimensional model spaces. We consider matrix representations  with respect to kernel bases, conjugate kernel bases, Clark bases and modified Clark bases. In each of these cases we show how the matrix representing an asymmetric truncated Toeplitz operator is completely determined by $m+n-1$ of its entries.

\section{The dimension of $\mathscr{T}(\alpha,\beta)$}

Here we compute the dimension of the space of all asymmetric truncated Toeplitz operators acting between finite-dimensional model spaces. We also give examples of bases for $\mathscr{T}(\alpha,\beta)$ in this case. The proofs given here are analogous to those from \cite[Thm. 7.1]{s}.

As mentioned in the
Introduction, if $K_{\alpha}$ has finite dimension $m>0$, then the dimension of $\mathcal{T}(\alpha)$ is $2m-1$ (\cite[Thm. 7.1(a)]{s}). Here we prove the following.

\begin{prop}\label{dym}
Let $K_{\alpha}$ have finite dimension $m>0$ and let $K_{\beta}$ have finite dimension $n>0$. The dimension of
$\mathscr{T}(\alpha,\beta)$ is $m+n-1$.
\end{prop}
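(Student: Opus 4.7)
The strategy is to reduce the computation of $\dim\mathscr{T}(\alpha,\beta)$ to a dimension count for an explicit finite-dimensional subspace of $L^2(\partial\mathbb{D})$, following the pattern of Sarason's argument in \cite[Thm.~7.1(a)]{s}. Consider the linear map $\Phi\colon L^2(\partial\mathbb{D})\to\mathcal{L}(K_\alpha,K_\beta)$ defined by $\Phi(\varphi)=A_\varphi^{\alpha,\beta}$; since every linear operator between finite-dimensional Hilbert spaces is automatically bounded, $\mathrm{Im}\,\Phi=\mathscr{T}(\alpha,\beta)$. The identity
$$\langle A_\varphi^{\alpha,\beta}f,g\rangle=\langle\varphi,\overline{f}\,g\rangle,\qquad f\in K_\alpha,\ g\in K_\beta,$$
shows that $\ker\Phi=V^\perp$, where $V=\mathrm{span}\{\overline{f}g:f\in K_\alpha,\,g\in K_\beta\}\subseteq L^2$. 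Since $V$ is finite-dimensional, $\dim\mathscr{T}(\alpha,\beta)=\dim V$. Rearranging the definition of $C_\alpha$ yields the boundary identity $\overline{f}=\bar\alpha z\widetilde{f}$ on $\partial\mathbb{D}$, so that $V=\bar\alpha z\cdot K_\alpha K_\beta$, where $K_\alpha K_\beta:=\mathrm{span}\{fg:f\in K_\alpha,\,g\in K_\beta\}$. Multiplication by the unimodular $\bar\alpha z$ is an isometry on $L^2$, so the problem reduces to showing $\dim K_\alpha K_\beta=m+n-1$.

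Since $\alpha\beta$ is a finite Blaschke product of degree $m+n$, the model space $K_{\alpha\beta}$ has dimension $m+n$. For $f\in K_\alpha$ and $g\in K_\beta$ (which are bounded in the finite-dimensional setting) one checks directly that $fg\in H^\infty$ and $fg\perp(\alpha\beta)H^2$, so $K_\alpha K_\beta\subseteq K_{\alpha\beta}$. To tighten this inclusion by one dimension, introduce the linear functional $L\colon K_{\alpha\beta}\to\mathbb{C}$ defined by $L(h)=\langle h,\widetilde{k}_0^{\alpha\beta}\rangle$. Combining the boundary formulas for $\overline{f}$ and $\overline{g}$ with $|\alpha|=|\beta|=1$ on $\partial\mathbb{D}$ gives $C_{\alpha\beta}(fg)=z\,\widetilde{f}\,\widetilde{g}$, which vanishes at the origin; therefore $L(fg)=\overline{(C_{\alpha\beta}(fg))(0)}=0$ and $K_\alpha K_\beta\subseteq\ker L$. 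Since $L(\widetilde{k}_0^{\alpha\beta})=\|\widetilde{k}_0^{\alpha\beta}\|^2=k_0^{\alpha\beta}(0)=1-|\alpha(0)\beta(0)|^2>0$, $L$ is nonzero, and $\dim K_\alpha K_\beta\le\dim\ker L=m+n-1$.

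The main obstacle is the matching lower bound $\dim K_\alpha K_\beta\ge m+n-1$, which requires exhibiting $m+n-1$ linearly independent products. I would first treat the generic case in which the zeros $a_1,\dots,a_m$ of $\alpha$ and $b_1,\dots,b_n$ of $\beta$ are simple, mutually distinct, and all nonzero. The Szeg\H{o} kernels $\{k_{a_i}^\alpha\}\cup\{k_{b_j}^\beta\}$ are then linearly independent in $H^2$, and partial fractions yield
$$k_{a_i}^\alpha\,k_{b_j}^\beta=\tfrac{1}{\bar a_i-\bar b_j}\bigl(\bar a_i k_{a_i}^\alpha-\bar b_j k_{b_j}^\beta\bigr).$$
A short pivot argument in the basis $\{k_{a_i}^\alpha\}\cup\{k_{b_j}^\beta\}$ shows that the $m+n-1$ products indexed by $(1,j)$ for $j=1,\dots,n$ and by $(i,1)$ for $i=2,\dots,m$ are linearly independent in $K_\alpha K_\beta$, giving the lower bound here. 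The remaining configurations---common zeros, higher multiplicities, or a zero at the origin---can be handled by a parallel case-analysis, replacing Szeg\H{o} kernels with derivatives of kernels where multiplicities occur and modifying the choice of products to accommodate a zero at the origin; in each case the argument mirrors the generic pivot computation. Combined with the upper bound, this yields $\dim K_\alpha K_\beta=m+n-1$ and therefore $\dim\mathscr{T}(\alpha,\beta)=m+n-1$.
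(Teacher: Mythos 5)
Your route is genuinely different from the paper's. The paper never touches $K_\alpha K_\beta$: it quotes the symbol characterization $A_\varphi^{\alpha,\beta}=0\iff\varphi\in\overline{\alpha H^2}+\beta H^2$ from \cite{blicharz1}, splits $\mathscr{T}(\alpha,\beta)$ into the span of the analytic-symbol part (dimension $n$, via $H^\infty/\beta H^\infty$) and the coanalytic-symbol part (dimension $m$), and shows the intersection is the line through $A_1^{\alpha,\beta}$ using Lemma~\ref{le1}. Your duality reduction is clean and correct: the pairing $\langle A_\varphi^{\alpha,\beta}f,g\rangle=\langle\varphi,\overline{f}g\rangle$ does give $\dim\mathscr{T}(\alpha,\beta)=\dim V=\dim K_\alpha K_\beta$, and your upper bound is complete and rather elegant --- the inclusion $K_\alpha K_\beta\subseteq K_{\alpha\beta}$ together with $C_{\alpha\beta}(fg)=z\widetilde{f}\widetilde{g}$ vanishing at $0$ pins the product space inside the kernel of the nonzero functional $L$. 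What the paper's approach buys is uniformity: its argument is completely insensitive to the zero configuration of $\alpha$ and $\beta$, whereas yours forces a case analysis at the lower bound.

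That lower bound is where the genuine gap sits. You fully prove $\dim K_\alpha K_\beta\geq m+n-1$ only when all $m+n$ zeros are simple, pairwise distinct, and nonzero; the remaining configurations are dispatched with ``a parallel case-analysis \ldots mirrors the generic pivot computation.'' But repeated zeros are not a cosmetic perturbation of that computation: once $\alpha$ has a zero of multiplicity $k$ at $a$, the basis of $K_\alpha$ involves the kernels $z^{j}(1-\overline{a}z)^{-j-1}$, the partial-fraction expansions of products acquire higher-order poles, and the pivot bookkeeping (which coefficients are guaranteed nonzero, and in which basis) has to be redone from scratch; the case of common zeros of unequal multiplicities compounds this. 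None of that is written, and it is the bulk of the remaining work. A cleaner repair that avoids all cases: the rank-one operators $\widetilde{k}_{w_j}^{\beta}\otimes k_{w_j}^{\alpha}$ for $m+n-1$ distinct $w_j\in\mathbb{D}$ lie in $\mathscr{T}(\alpha,\beta)$ (Proposition~\ref{rankone}(a)), and the linear-independence argument of Corollary~\ref{bazy} uses only the linear independence of $m$ kernels in $K_\alpha$ and $n$ conjugate kernels in $K_\beta$ at distinct points --- not Proposition~\ref{dym} itself --- so it supplies the bound $\dim\mathscr{T}(\alpha,\beta)\geq m+n-1$ uniformly in the zero structure. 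Substituting that for your generic-case computation would close the argument.
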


In the proof of Proposition \ref{dym} we use the fact that if $\alpha$, $\beta$ are two nonconstant inner functions, then $A_{\varphi}^{\alpha, \beta}=0$ if and only if $\varphi\in \overline{\alpha H^2}+\beta H^2$ (see \cite[Thm 2.1]{blicharz1} for proof). We also use the following simple lemma from \cite{blicharz1}.

\begin{lem}[\cite{blicharz1}, Lem. 2.2]\label{le1}
Let $\alpha$, $\beta$ be two arbitrary inner functions. If
\begin{equation*}%\label{9}
K_{\alpha}\subset \beta H^2,
\end{equation*}
then both $\alpha$ and $\beta$ have no zeros in $\mathbb{D}$, or at least one of the functions $\alpha$ or $\beta$ is a constant function.
\end{lem}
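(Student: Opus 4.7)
The plan is to realize $\mathscr{T}(\alpha,\beta)$ as a quotient of an $(m+n)$-dimensional space by a one-dimensional subspace. Since $\alpha,\beta$ are finite Blaschke products, $K_\alpha,K_\beta\subset H^{\infty}$, so $A_\varphi^{\alpha,\beta}$ is bounded for every $\varphi\in L^2(\partial\mathbb{D})$. Combined with the cited fact that $A_\varphi^{\alpha,\beta}=0$ iff $\varphi\in\overline{\alpha H^2}+\beta H^2$, the symbol map induces a complex-linear isomorphism $L^2/(\overline{\alpha H^2}+\beta H^2)\cong\mathscr{T}(\alpha,\beta)$, so it suffices to compute the codimension of $\overline{\alpha H^2}+\beta H^2$ in $L^2$.

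I would introduce the complex-linear map
$$\Psi\colon\overline{K_\alpha}\oplus K_\beta\longrightarrow L^2/(\overline{\alpha H^2}+\beta H^2),\qquad (\bar q,p)\longmapsto \bar q+p+(\overline{\alpha H^2}+\beta H^2).$$
Surjectivity follows by decomposing $\varphi\in L^2$ as $\varphi=\bar u+v$ with $u\in H_0^2$, $v\in H^2$, then replacing $u$ by $P_\alpha u$ and $v$ by $P_\beta v$ (the errors $u-P_\alpha u\in\alpha H^2$ and $v-P_\beta v\in \beta H^2$ are absorbed into the kernel). For the kernel of $\Psi$, suppose $\bar q+p=\bar\alpha\bar f+\beta g$ with $q\in K_\alpha$, $p\in K_\beta$, $f,g\in H^2$; rearranging gives $\bar q-\bar\alpha\bar f=\beta g-p$, and since the left side lies in $\overline{H^2}$ and the right side in $H^2$, both equal a common constant $c\in\mathbb{C}$. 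Projecting the identity $q-\alpha f=\bar c$ with $P_\alpha$ and $p+c=\beta g$ with $P_\beta$ yields $q=\bar c\, k_0^\alpha$ and $p=-c\,k_0^\beta$, so the kernel equals $\{(c\,\overline{k_0^\alpha},-c\,k_0^\beta):c\in\mathbb{C}\}$.

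To conclude that this kernel is exactly one-dimensional, I would invoke Lemma~\ref{le1}: applied with the Blaschke factor $z$ in place of the inner function called $\beta$ there, it shows that neither $K_\alpha$ nor $K_\beta$ can be contained in $zH^2=H_0^2$, and hence the reproducing kernels $k_0^\alpha,k_0^\beta$ are both nonzero. Consequently $\dim\mathscr{T}(\alpha,\beta)=\dim(\overline{K_\alpha}\oplus K_\beta)-1=m+n-1$. The main subtlety lies in the bookkeeping with complex conjugation: the domain of $\Psi$ must be $\overline{K_\alpha}$ rather than $K_\alpha$ for the map to be complex-linear, and in the kernel the parameter $c$ appears conjugated in the $K_\alpha$-coordinate but not in the $K_\beta$-coordinate, reflecting the essential asymmetry between the two inner functions.
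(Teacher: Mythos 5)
Your proposal does not prove the statement it was assigned to prove. The statement is Lemma~\ref{le1}: if $K_{\alpha}\subset\beta H^2$, then either both $\alpha$ and $\beta$ are zero-free in $\mathbb{D}$ or one of them is constant. What you have written is instead an argument for Proposition~\ref{dym}, the dimension count $\dim\mathscr{T}(\alpha,\beta)=m+n-1$ --- and, worse for the present purpose, your argument explicitly \emph{invokes} Lemma~\ref{le1} as a known fact (``I would invoke Lemma~\ref{le1} \dots applied with the Blaschke factor $z$''). So as a proof of the lemma it is vacuous, and read as such it is circular. The paper itself does not prove this lemma either; it imports it from the reference cited in the statement, so there is no internal proof to match, but that does not change the fact that your text establishes a different proposition.

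A direct proof of the actual statement is short, and you should supply one along these lines. Suppose $K_{\alpha}\subset\beta H^2$ with both $\alpha$ and $\beta$ nonconstant. If $\beta(w)=0$ for some $w\in\mathbb{D}$, then every $f\in K_{\alpha}$ vanishes at $w$; in particular $0=k_w^{\alpha}(w)=(1-|\alpha(w)|^2)/(1-|w|^2)$, so $|\alpha(w)|=1$ and the maximum modulus principle forces $\alpha$ to be constant, a contradiction. Hence $\beta$ has no zeros. If $\alpha(a)=0$ for some $a\in\mathbb{D}$, then $k_a^{\alpha}(z)=(1-\overline{a}z)^{-1}$ lies in $K_{\alpha}\subset\beta H^2$; since $(1-\overline{a}z)^{-1}$ is invertible in $H^{\infty}$ (hence outer), the inner function $\beta$ must be constant, again a contradiction. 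Hence $\alpha$ has no zeros either, which is the asserted dichotomy. (As a side remark on your dimension argument: the appeal to the lemma to show $k_0^{\alpha}\neq 0$ is also unnecessary, since $k_0^{\alpha}=1-\overline{\alpha(0)}\alpha$ vanishes identically only when $\alpha$ is a unimodular constant, which is excluded by $m>0$.)
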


\begin{proof}[Proof of Proposition \ref{dym}]
By \cite[Cor. 2.6]{blicharz1}, every operator $A$ in $\mathscr{T}(\alpha,\beta)$ can be written as a sum $A=A_{\overline{\chi}}+A_{\psi}$ with $\chi\in K_{\alpha}$ and $\psi\in K_{\beta}$. Since $K_{\alpha}$ and $K_{\beta}$ have finite dimension, it follows that $\alpha$ and $\beta$ are finite Blaschke products and $K_{\alpha}\subset H^{\infty}$, $K_{\beta}\subset H^{\infty}$. Consequently, $\mathscr{T}(\alpha,\beta)$ is spanned by its subspaces
\begin{displaymath}
\mathscr{T}_{\infty}(\alpha,\beta)=\{A_{\varphi}^{\alpha, \beta}: \varphi \in {H}^{\infty}\}
\end{displaymath}
and
\begin{displaymath}
\mathscr{T}_{\overline{\infty}}(\alpha,\beta)=\{A_{\varphi}^{\alpha, \beta}: \varphi \in \overline{H^{\infty}}\}.
\end{displaymath}

We first compute the dimension of $\mathscr{T}_{\infty}(\alpha,\beta)$ and the dimension of $\mathscr{T}_{\overline{\infty}}(\alpha,\beta)$. To this end, we consider the linear mapping
$\varphi\mapsto A_{\varphi}^{\alpha, \beta}$ acting from ${H}^{\infty}$ onto $\mathscr{T}_{\infty}(\alpha,\beta)$. By \cite[Thm 2.1]{blicharz1}, its kernel is equal to $\beta H^{\infty}$. Indeed, if $\varphi\in\beta H^{\infty}$, then $A_{\varphi}^{\alpha, \beta}=0$. On the other hand, if $\varphi\in H^{\infty}$ and $A_{\varphi}^{\alpha, \beta}=0$, then $\varphi=\overline{\alpha h}_1+\beta h_2$ for some $h_1, h_2\in H^2$. Hence $\varphi-\beta h_2=\overline{\alpha h}_1$ is a constant function, $\varphi=\beta h_2+c$ for some complex number $c$, and
$$0=A_{\varphi}^{\alpha, \beta}=A_{\beta h_2+c}^{\alpha, \beta}=cP_{\beta|K_{\alpha}}.$$
If $c\neq 0$, then the above implies that $K_{\alpha}\subset\beta H^2$, which, by Lemma \ref{le1}, never happens for nonconstant Blaschke products $\alpha$, $\beta$. Therefore $c=0$ and $\varphi\in \beta H^{\infty}$. From this
\begin{displaymath}
\dim \mathscr{T}_{\infty}(\alpha,\beta)=\dim\left({H}^{\infty}/{\beta H^{\infty}}\right)=n.
\end{displaymath}
Similarly, the mapping $\varphi\mapsto A_{\varphi}^{\alpha, \beta}$ acting from $\overline{H^{\infty}} $ onto $\mathscr{T}_{\overline{\infty}}(\alpha,\beta)$ has kernel equal to $\overline{\alpha H^{\infty}}$ and
\begin{displaymath}
\dim \mathscr{T}_{\overline{\infty}}(\alpha,\beta)=\dim \left(\overline{H^{\infty}}/\overline{\alpha H^{\infty}}\right)=m.
\end{displaymath}

To complete the proof, we only need to show that
\begin{equation}\label{czy}
\dim (\mathscr{T}_{\infty}(\alpha,\beta)\cap \mathscr{T}_{\overline{\infty}}(\alpha,\beta))=1,
\end{equation}
for then
\begin{displaymath}
\begin{split}
\dim \mathscr{T}(\alpha,\beta)&=\dim \mathscr{T}_{\infty}(\alpha,\beta)+\dim \mathscr{T}_{\overline{\infty}}(\alpha,\beta)-\dim (\mathscr{T}_{\infty}(\alpha,\beta)\cap \mathscr{T}_{\overline{\infty}}(\alpha,\beta))\\
&=m+n-1.
\end{split}
\end{displaymath}

Note that here $A_{1}^{\alpha, \beta}\neq 0$. Otherwise, we would have $K_{\alpha}\subset\beta H^2$, which, by Lemma \ref{le1} again, is impossible for nonconstant Blaschke products $\alpha$, $\beta$. Clearly, $A_{1}^{\alpha, \beta}\in \mathscr{T}_{\infty}(\alpha,\beta)\cap \mathscr{T}_{\overline{\infty}}(\alpha,\beta)$, so
$\mathscr{T}_{\infty}(\alpha,\beta)\cap \mathscr{T}_{\overline{\infty}}(\alpha,\beta)\neq\{0\}$.

Assume now that $A\in \mathscr{T}_{\infty}(\alpha,\beta)\cap \mathscr{T}_{\overline{\infty}}(\alpha,\beta)$, say $A=A_{\varphi_1}^{\alpha, \beta}=A_{\overline{\varphi}_2}^{\alpha, \beta}$, $\varphi_1,\varphi_2\in {H}^{\infty}$. Then
\begin{displaymath}
A_{\varphi_1-\overline{\varphi}_2}^{\alpha, \beta}=0,
\end{displaymath}
and $\varphi_1-\overline{\varphi}_2\in \overline{\alpha H^2}+\beta H^2$ by \cite[Thm 2.1]{blicharz1}. In other words, there exist $h_1,h_2\in H^2$ such that
\begin{displaymath}
\varphi_1-\overline{\varphi}_2=\overline{\alpha h}_1+\beta h_2
\end{displaymath} or
\begin{displaymath}
\varphi_1-\beta h_2=\overline{\varphi_2+\alpha h_1}.
\end{displaymath}
This implies that there exists a complex number $c$ such that $\varphi_1=c+\beta h_2$ and $A=A_{c+\beta h_2}^{\alpha, \beta}=cA_{1}^{\alpha, \beta}$. Thus every operator in $\mathscr{T}_{\infty}(\alpha,\beta)\cap \mathscr{T}_{\overline{\infty}}(\alpha,\beta)$ is a scalar multiple of $A_{1}^{\alpha, \beta}$ and \eqref{czy} holds.
\end{proof}

Recall the following.

\begin{prop}[\cite{blicharz1}, Prop. 3.1]\label{rankone}
Let $ \alpha$, $\beta$ be two nonconstant inner functions.
\begin{itemize}
\item[(a)] For $w\in \mathbb{D}$, the operators $\widetilde{k}_w^{\beta}\otimes {k}_w^{\alpha}$ and
$k_w^{\beta}\otimes \widetilde{k}_w^{\alpha}$ belong to $\mathscr{T}(\alpha,\beta)$, $$\widetilde{k}_w^{\beta}\otimes {k}_w^{\alpha}=A_{\frac{{\beta(z)}}{{z}-{w}}}^{\alpha, \beta}\quad \mathrm{and}\quad k_w^{\beta}\otimes \widetilde{k}_w^{\alpha}=A_{\frac{\overline{\alpha(z)}}{\overline{z}-\overline{w}}}^{\alpha, \beta}.$$
\item[(b)] If both $\alpha$ and $\beta$ have an ADC at the point $\eta$ of $\partial\mathbb{D}$, then the operator $k_{\eta}^{\beta}\otimes {k}_{\eta}^{\alpha} $ belongs to $\mathscr{T}(\alpha,\beta)$, $$k_{\eta}^{\beta}\otimes {k}_{\eta}^{\alpha} =A_{k_{\eta}^{\beta}+\overline{{k}}_{\eta}^{\alpha}-1}^{\alpha, \beta}.$$
\end{itemize}
\end{prop}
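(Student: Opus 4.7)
For part (a), the first identity can be established by testing against arbitrary $g\in K_\beta$ and reducing to the Szeg\"{o} reproducing formula. Indeed $(\widetilde{k}_w^\beta\otimes k_w^\alpha)f=f(w)\widetilde{k}_w^\beta$ by the reproducing property in $K_\alpha$, while $\widetilde{g}(w)=\langle\widetilde{k}_w^\beta,g\rangle$ follows from $\widetilde{k}_w^\beta=C_\beta k_w^\beta$ together with the antilinear involution property of $C_\beta$, so it suffices to show $\langle A_{\beta/(z-w)}^{\alpha,\beta}f,g\rangle=f(w)\widetilde{g}(w)$. This integral transforms using two boundary identities: the conjugation relation $\beta\overline{g}=z\widetilde{g}$ on $|z|=1$ (coming from $C_\beta g=\beta\overline{z}\overline{g}$), and $z/(z-w)=\overline{k_w(z)}$ on $|z|=1$. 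Substituting, the integral becomes $\int f\widetilde{g}\,\overline{k_w}\,dm=(f\widetilde{g})(w)=f(w)\widetilde{g}(w)$ by the Szeg\"{o} reproducing formula applied to $f\widetilde{g}\in H^1$. The second identity in (a) then follows from the first by swapping the roles of $\alpha$ and $\beta$ and passing to adjoints, using $(A_\psi^{\beta,\alpha})^*=A_{\overline{\psi}}^{\alpha,\beta}$ together with $(\widetilde{k}_w^\alpha\otimes k_w^\beta)^*=k_w^\beta\otimes\widetilde{k}_w^\alpha$.

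For part (b) I would verify directly that $P_\beta(\varphi f)=f(\eta)k_\eta^\beta$ for $f\in K_\alpha$ with $\varphi=k_\eta^\beta+\overline{k_\eta^\alpha}-1$, splitting $\varphi f=k_\eta^\beta f+(\overline{k_\eta^\alpha}-1)f$. Writing $k_\eta^\beta f=f(\eta)k_\eta^\beta+k_\eta^\beta(f-f(\eta))$ and expanding $k_\eta^\beta$ via \eqref{kerker} shows the second piece equals $-\eta u+\overline{\beta(\eta)}\eta\beta u$, where $u(z)=(f(z)-f(\eta))/(z-\eta)\in H^2$; since the term $\overline{\beta(\eta)}\eta\beta u$ lies in $\beta H^2$, it is annihilated by $P_\beta$, giving $P_\beta(k_\eta^\beta f)=f(\eta)k_\eta^\beta-\eta P_\beta(u)$. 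For the other piece, a short boundary computation gives $\overline{k_\eta^\alpha(z)}-1=(\eta-\alpha(\eta)z\overline{\alpha(z)})/(z-\eta)$, so $(\overline{k_\eta^\alpha}-1)f=(\eta f-\alpha(\eta)v)/(z-\eta)$ with $v(z):=z\overline{\alpha(z)}f(z)$; because $f\in K_\alpha$ means $\overline{\alpha}f\in\overline{H_0^2}$, the function $v$ lies in $\overline{H^2}$, and its boundary value $v(\eta)=\eta\overline{\alpha(\eta)}f(\eta)$ satisfies $\alpha(\eta)v(\eta)=\eta f(\eta)$. Rewriting the numerator as $\eta(f-f(\eta))-\alpha(\eta)(v-v(\eta))$ gives $(\overline{k_\eta^\alpha}-1)f=\eta u-\alpha(\eta)(v-v(\eta))/(z-\eta)$. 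The crucial step is the identification $(v-v(\eta))/(z-\eta)\in\overline{H_0^2}\subset K_\beta^\perp$, after which $P_\beta((\overline{k_\eta^\alpha}-1)f)=\eta P_\beta(u)$; the $\eta P_\beta(u)$ terms then cancel to leave $P_\beta(\varphi f)=f(\eta)k_\eta^\beta=(k_\eta^\beta\otimes k_\eta^\alpha)f$.

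The main obstacle is the last step, namely showing $(v-v(\eta))/(z-\eta)\in\overline{H_0^2}$ for $v\in\overline{H^2}$. This will follow from the Laurent expansion $v=\sum_{n\ge0}c_n\overline{z}^n$: on $|z|=1$, each quotient $(\overline{z}^n-\overline{\eta}^n)/(z-\eta)$ unfolds to a polynomial in $\overline{z}$ of degree $n$ with no constant term, so the whole series lies in $\overline{H_0^2}$. This is where the hypothesis $f\in K_\alpha$ enters decisively, via the characterization $\overline{\alpha}f\in\overline{H_0^2}$, which is what promotes $v=z\overline{\alpha}f$ from merely $L^2$ into $\overline{H^2}$. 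Aside from this identification, both parts rest on routine boundary-integral computations driven by the two identities $\beta\overline{g}=z\widetilde{g}$ (from the conjugation $C_\beta$) and $z/(z-w)=\overline{k_w(z)}$ on $\partial\mathbb{D}$.
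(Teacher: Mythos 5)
This proposition is quoted in the paper without proof (it is imported from \cite{blicharz1}, Prop.\ 3.1), so there is no in-paper argument to compare against; I can only assess your proof on its own terms. Your part (a) is correct and is essentially the standard argument: pairing against $g\in K_\beta$, using $\beta\overline{g}=z\widetilde{g}$ and $z/(z-w)=\overline{k_w(z)}$ on the circle, and invoking the Cauchy formula for $f\widetilde{g}\in H^1$; the adjoint identity $(A_\psi^{\beta,\alpha})^*=A_{\overline{\psi}}^{\alpha,\beta}$ then gives the second formula.

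Part (b), however, has a genuine gap at exactly the point you flag as the ``main obstacle.'' Your decomposition requires both $u=(f-f(\eta))/(z-\eta)\in H^2$ and $(v-v(\eta))/(z-\eta)\in\overline{H_0^2}$, and neither follows from the hypotheses. The Laurent-series justification is purely formal: writing $v=\sum_{n\ge 0}c_n\overline{z}^n$ and dividing termwise by $z-\eta$ produces a series whose $k$-th coefficient is a tail sum $-\sum_{n\ge k}c_n\overline{\eta}^{\,n-k+1}$, and such tail sums need not be square-summable for an arbitrary $\ell^2$ sequence $(c_n)$. Indeed, your own accounting of where the hypotheses enter --- only through $v\in\overline{H^2}$ --- cannot be right, since membership in $\overline{H^2}$ gives no control whatsoever on a difference quotient at a prescribed boundary point. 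Worse, the needed statement is false under the ADC hypothesis alone: taking a Blaschke product with zeros $a_j$ satisfying $1-|a_j|=8^{-j}$ and $\arg a_j=2^{-j}$, one checks that $\alpha$ has an ADC at $\eta=1$, yet $\|(\hat{k}_{a_N}-\hat{k}_{a_N}(1))/(z-1)\|_2^2\gtrsim 4^N$ for the normalized kernels $\hat{k}_{a_N}=k_{a_N}/\|k_{a_N}\|$, so by the closed graph theorem the difference quotient at $1$ cannot map all of $K_\alpha$ into $H^2$. Your argument does go through when $\alpha$ and $\beta$ are finite Blaschke products (everything is then rational and analytic across $\partial\mathbb{D}$), which is the only case this paper uses, but it does not prove the proposition as stated. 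The standard proof of (b) --- Sarason's Theorem 5.1 in \cite{s}, adapted in \cite{blicharz1} --- instead passes to the nontangential limit $w\to\eta$ in the interior identities of part (a), using the norm convergence $k_w^{\alpha}\to k_{\eta}^{\alpha}$ and $k_w^{\beta}\to k_{\eta}^{\beta}$ (which is precisely what the ADC supplies) to get convergence of the rank-one operators, and tracking the symbols modulo $\overline{\alpha H^2}+\beta H^2$.
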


As mentioned before, a finite Blaschke product is analytic in a domain containing the closed unit disk $\overline{\mathbb{D}}=\{z\colon|z|\leq1\}$, and therefore has an ADC at every $\eta\in\partial\mathbb{D}$. Consequently, if $\alpha$ and $\beta$ are two finite Blaschke products, then $k_{\eta}^{\alpha}\in K_{\alpha}$, $k_{\eta}^{\beta}\in K_{\beta}$ and $k_{\eta}^{\beta}\otimes {k}_{\eta}^{\alpha} $ belongs to $\mathscr{T}(\alpha,\beta)$ for all $\eta\in\partial\mathbb{D}$. Moreover, by \eqref{kerk},
\begin{displaymath}
k_{\eta}^{\beta}\otimes \widetilde{k}_{\eta}^{\alpha}=\overline{\alpha(\eta)}\eta k_{\eta}^{\beta}\otimes k_{\eta}^{\alpha}\quad\mathrm{and}\quad \widetilde{k}_{\eta}^{\beta}\otimes k_{\eta}^{\alpha}=\alpha(\eta)\overline{\eta} k_{\eta}^{\beta}\otimes k_{\eta}^{\alpha}.
\end{displaymath}

\begin{cor}\label{bazy}
Let $K_{\alpha}$ have finite dimension $m>0$ and let $K_{\beta}$ have finite dimension $n>0$. If $w_1,\ldots,w_{m+n-1}$ are distinct points in the closed unit disk $\overline{\mathbb{D}}$, then:
\begin{itemize}
\item[(a)] the operators $\widetilde{k}_{w_j}^{\beta}\otimes k_{w_j}^{\alpha}$, $j=1,\ldots,m+n-1$, form a basis for $\mathcal{T}(\alpha,\beta)$;
\item[(b)] the operators $k_{w_j}^{\beta}\otimes \widetilde{k}_{w_j}^{\alpha}$, $j=1,\ldots,m+n-1$, form a basis for $\mathcal{T}(\alpha,\beta)$.
\end{itemize}
\end{cor}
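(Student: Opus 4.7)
The strategy is to invoke Proposition~\ref{dym}, which gives $\dim\mathcal{T}(\alpha,\beta)=m+n-1$, exactly the count in each part. Each operator $\widetilde{k}_{w_j}^\beta\otimes k_{w_j}^\alpha$ (resp.\ $k_{w_j}^\beta\otimes\widetilde{k}_{w_j}^\alpha$) already lies in $\mathcal{T}(\alpha,\beta)$: this is Proposition~\ref{rankone}(a) for $w_j\in\mathbb{D}$, and for $w_j\in\partial\mathbb{D}$ the observation following Proposition~\ref{rankone} identifies them, up to unimodular scalars, with $k_{w_j}^\beta\otimes k_{w_j}^\alpha\in\mathcal{T}(\alpha,\beta)$ (available because finite Blaschke products have ADCs at every boundary point). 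It therefore suffices to prove linear independence in each case.

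For part (a), suppose $\sum_{j}c_j\,\widetilde{k}_{w_j}^\beta\otimes k_{w_j}^\alpha=0$. Evaluating on $f\in K_\alpha$ and pairing with $g\in K_\beta$, with $\langle f,k_{w_j}^\alpha\rangle=f(w_j)$ and $\langle\widetilde{k}_{w_j}^\beta,g\rangle=\widetilde g(w_j)$ (a direct consequence of $C_\beta$ being an antilinear isometric involution preserving $K_\beta$), converts the hypothesis into
\begin{equation*}
\sum_{j=1}^{m+n-1}c_j\,f(w_j)\,\widetilde g(w_j)=0.
\end{equation*}
Since $C_\beta$ bijects $K_\beta$, writing $h=\widetilde g$ gives $\sum_j c_j f(w_j)h(w_j)=0$ for all $f\in K_\alpha$ and $h\in K_\beta$.

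Fix $k$; to force $c_k=0$ I will exhibit $f\in K_\alpha$ and $h\in K_\beta$ whose product vanishes at every $w_j$ with $j\neq k$ but not at $w_k$. Partition $\{1,\dots,m+n-1\}\setminus\{k\}$ into $S_\alpha, S_\beta$ of sizes $m-1,n-1$. Imposing the $m-1$ linear conditions $f(w_j)=0$, $j\in S_\alpha$, on the $m$-dimensional $K_\alpha$ produces a nonzero $f$ vanishing on $\{w_j:j\in S_\alpha\}$; similarly one obtains $h\in K_\beta\setminus\{0\}$ vanishing on $\{w_j:j\in S_\beta\}$. The main (if mild) obstacle is to check that neither $f$ nor $h$ picks up any extra zeros: this uses the description of $K_\alpha$ as the space of rational functions $p(z)/\prod_i(1-\overline{a_i}z)$ with $\deg p\le m-1$, where the $a_i$ are the zeros of $\alpha$ counted with multiplicity, so any nonzero element of $K_\alpha$ has at most $m-1$ zeros in $\mathbb{C}$; the analogous statement holds in $K_\beta$. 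Consequently $f(w_j)h(w_j)=0$ for $j\neq k$ while $f(w_k)h(w_k)\neq 0$, and the sum collapses to $c_k f(w_k)h(w_k)=0$, yielding $c_k=0$.

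Part (b) is established by an identical argument after computing $\langle f,\widetilde{k}_{w_j}^\alpha\rangle=\overline{\widetilde f(w_j)}$ and $\langle k_{w_j}^\beta,g\rangle=\overline{g(w_j)}$; using that $C_\alpha$ bijects $K_\alpha$, the hypothesis again reduces to $\sum_j d_j u(w_j)v(w_j)=0$ for every $u\in K_\alpha$ and $v\in K_\beta$, and the same prescribed-zeros argument forces every $d_j$, hence every $c_j$, to vanish.
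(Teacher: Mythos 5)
Your proposal is correct, and its skeleton is the same as the paper's: membership in $\mathscr{T}(\alpha,\beta)$ via Proposition \ref{rankone} (with the boundary case handled through the ADC remark), then the dimension count from Proposition \ref{dym} reduces everything to linear independence. Where you diverge is in how that independence is proved. The paper applies the vanishing sum to a single test function $f\in K_{\alpha}$ chosen so that $\langle f,k_{w_j}^{\alpha}\rangle=\delta_{1j}$ for $j\leq m$, obtaining a vanishing combination of the $n$ conjugate kernels $\widetilde{k}_{w_1}^{\beta},\widetilde{k}_{w_{m+1}}^{\beta},\ldots,\widetilde{k}_{w_{m+n-1}}^{\beta}$, and then quotes linear independence of conjugate kernels at distinct points. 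You instead pair with test vectors on both sides, reduce to the bilinear identity $\sum_j c_j f(w_j)h(w_j)=0$, and split the $m+n-2$ indices other than $k$ between $f$ and $h$; the price is that you must rule out accidental extra zeros, which you do correctly via the rational-function model of $K_{\alpha}$ (a nonzero element has numerator of degree at most $m-1$, hence at most $m-1$ zeros, and the poles lie outside $\overline{\mathbb{D}}$ so evaluation at the $w_j$ is harmless). Your route is marginally longer and leans on the explicit finite-dimensional description of model spaces, but it is self-contained and symmetric in $\alpha$ and $\beta$, whereas the paper's is shorter at the cost of citing the independence of (conjugate) kernel families. Both are complete.
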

\begin{proof}
We only prove part $(\mathrm{a})$ of the theorem. Proof of part $(\mathrm{b})$ is similar (compare with \cite[Thm. 7.1(b)]{s} and \cite[Lem. 3.1]{w}).

Let $w_1,\ldots,w_{m+n-1}$ be distinct points in $\overline{\mathbb{D}}$. By Proposition \ref{rankone}, the operators $\widetilde{k}_{w_j}^{\beta}\otimes k_{w_j}^{\alpha}$, $j=1,\ldots,m+n-1$, belong to $\mathcal{T}(\alpha,\beta)$. Since the dimension of $\mathcal{T}(\alpha,\beta)$ is $m+n-1$, it is enough to prove that these operators are linearly independent.

Assume that
$$\sum_{j=1}^{m+n-1}c_j\widetilde{k}_{w_j}^{\beta}\otimes k_{w_j}^{\alpha}=0$$
for some scalars $c_1,\ldots,c_{m+n-1}$. We first show that $c_1=0$.

Since the functions $k_{w_1}^{\alpha},\ldots, k_{w_m}^{\alpha}$ are linearly independent (see \cite[p. 509]{s}), there exists $f\in K_{\alpha}$ such that
$$\langle f,k_{w_j}^{\alpha}\rangle=\left\{\begin{array}{cc} 1&\mathrm{for}\ j= 1,\\0&\mathrm{for}\ 1<j\leq m,\end{array}\right.$$
and
\begin{displaymath}
0=\sum_{j=1}^{m+n-1}c_j\widetilde{k}_{w_j}^{\beta}\otimes k_{w_j}^{\alpha}(f)=c_1\widetilde{k}_{w_1}^{\beta}+\sum_{j=m+1}^{m+n-1}c_jf(w_j)\widetilde{k}_{w_j}^{\beta}.
\end{displaymath}
But $\widetilde{k}_{w_1}^{\beta}, \widetilde{k}_{w_{m+1}}^{\beta},\ldots, \widetilde{k}_{w_{m+n-1}}^{\beta}$ also are linearly independent, so $c_1=0$.

A similar reasoning shows that $c_j=0$ for every $j=1,\ldots,m+n-1$, which completes the proof.
\end{proof}

\section{Matrix representations}

For the reminder of the paper we assume that $\alpha$ and $\beta$ are two finite Blaschke products with zeros $a_1,\ldots, a_m$ and $b_1,\ldots, b_n$, respectively, that is,
\begin{equation}\label{blaszki}
\alpha(z)=\prod_{i=1}^{m}\frac{a_i-z}{1-\overline{a}_iz},\qquad \beta(z)=\prod_{j=1}^{n}\frac{b_j-z}{1-\overline{b}_jz}.
\end{equation}

\subsection{Kernel bases and conjugate kernel bases}

Let $\alpha$ and $\beta$ be given by \eqref{blaszki}. Here we assume that the zeros $a_1,\ldots, a_m$ are distinct and that the zeros $b_1,\ldots, b_n$ are distinct. Then the kernel functions $\{k_{a_1}^{\alpha},\ldots,k_{a_m}^{\alpha}\}$ form a basis for $K_{\alpha}$ and so do the conjugate kernel functions $\{\widetilde{k}_{a_1}^{\alpha},\ldots,\widetilde{k}_{a_m}^{\alpha}\}$. Similarly, $\{k_{b_1}^{\beta},\ldots,k_{b_n}^{\beta}\}$ and $\{\widetilde{k}_{b_1}^{\beta},\ldots,\widetilde{k}_{b_n}^{\beta}\}$ are bases for $K_{\beta}$.

Of course, it is possible that $\alpha$ and $\beta$ have some zeros in common. In this subsection we assume that $\alpha$ and $\beta$ have precisely $l$ zeros in common ($l=0$ if there are no zeros in common), those zeros being $a_i=b_i$ for $i\leq l$.

Let $A$ be any linear operator from $K_{\alpha}$ into $K_{\beta}$. Its matrix representation $M_A=(r_{s,p})$ with respect to the kernel bases $\{k_{a_1}^{\alpha},\ldots,k_{a_m}^{\alpha}\}$ and $\{k_{b_1}^{\beta},\ldots,k_{b_n}^{\beta}\}$ is determined by
$$Ak_{a_p}^{\alpha}=\sum_{j=1}^{n}r_{j,p}k_{b_j}^{\beta},\quad p=1,\ldots,m.$$
Since
$$\langle k_{b_j}^{\beta},\widetilde{k}_{b_s}^{\beta}\rangle=\left\{\begin{array}{cc} \overline{\beta'(b_s)}&\mathrm{for}\ j= s,\\0&\mathrm{for}\ j\neq s,\end{array}\right.$$
we have
\begin{equation*}
r_{s,p}=(\overline{\beta'(b_s)})^{-1}\langle A k^{\alpha}_{a_p}, \widetilde{k}_{b_s}^{\beta}\rangle .
\end{equation*}

Similarly, if $\widetilde{M}_A=(t_{s,p})$ is the matrix representation of $A$ with respect to the conjugate kernel bases $\{\widetilde{k}_{a_1}^{\alpha},\ldots,\widetilde{k}_{a_m}^{\alpha}\}$ and $\{\widetilde{k}_{b_1}^{\beta},\ldots,\widetilde{k}_{b_n}^{\beta}\}$, then
\begin{equation*}
t_{s,p}= \beta'(b_s)^{-1}\langle A \widetilde{k}_{a_p}^{\alpha}, k_{b_s}^{\beta}\rangle .
\end{equation*}

\begin{thm}\label{macierz}
Let the function $\alpha$ be a finite Blaschke product with $m$ distinct zeros $a_1,\ldots, a_m$, let $\beta$ be a finite Blaschke product with $n$ distinct zeros $b_1,\ldots, b_n$ and assume that $\alpha$ and $\beta$ have precisely $l$ zeros in common: $a_i=b_i$ for $i\leq l$ ($l=0$ if there are no zeros in common). Let $A$ be any linear transformation from $K_{\alpha}$ into $K_{\beta}$. If $M_A=(r_{s,p})$ is the matrix representation of $A$ with respect to the bases $\{k_{a_1}^{\alpha},\ldots, k_{a_m}^{\alpha} \}$ and $\{k_{b_1}^{\beta},\ldots, k_{b_n}^{\beta} \}$, and
  \vspace{0.2cm}
\begin{itemize}
\item[(a)] $l=0 $, then $A\in \mathscr{T}(\alpha,\beta)$ if and only if
\begin{equation}\label{1}
r_{s,p}=\frac{\overline{\beta'(b_s)}(\overline{a}_1-\overline{b}_s)r_{s,1}+\overline{\beta'(b_1)}(\overline{b}_1-\overline{a}_1)r_{1,1}+\overline{\beta'(b_1)}(\overline{a}_p-\overline{b}_1)r_{1,p}}{\overline{\beta'(b_s)}(\overline{a}_p-\overline{b}_s)}
\end{equation} for all $1\leq  p\leq m$ and $1\leq s \leq n$;
  \vspace{0.3cm}

\item[(b)] $l>0$, then $A\in \mathscr{T}(\alpha,\beta)$ if and only if
\begin{equation}\label{2}
r_{s,p}=\frac{\overline{\beta'(b_1)}(\overline{a}_1-\overline{b}_s)r_{1,s}+\overline{\beta'(b_1)}(\overline{a}_p-\overline{b}_1)r_{1,p}}{\overline{\beta'(b_s)}(\overline{a}_p-\overline{b}_s)}
\end{equation} for all $p,s$ such that $1\leq  p\leq m$, $1\leq s \leq l$, $s\neq p$, and
\begin{equation}\label{3}
r_{s,p}=\frac{\overline{\beta'(b_s)}(\overline{a}_1-\overline{b}_s)r_{s,1}+\overline{\beta'(b_1)}(\overline{a}_p-\overline{b}_1)r_{1,p}}{\overline{\beta'(b_s)}(\overline{a}_p-\overline{b}_s)}
\end{equation} for all $p,s$ such that $1\leq p \leq m$, $l<  s\leq n$.
\end{itemize}
\end{thm}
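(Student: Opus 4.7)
Since $\mathscr{T}(\alpha,\beta)$ has dimension $m+n-1$ by Proposition \ref{dym} while an $n\times m$ matrix has $mn$ entries, one expects $(m-1)(n-1)$ linear relations to characterise matrices of operators in $\mathscr{T}(\alpha,\beta)$. The plan is two-fold: first verify the stated relations on a spanning set of $\mathscr{T}(\alpha,\beta)$; then show that those relations cut the $mn$-dimensional space of all matrices down to dimension exactly $m+n-1$. Together with Proposition \ref{dym}, these two steps give the theorem.

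For the forward direction, Corollary \ref{bazy}(a) provides a spanning family for $\mathscr{T}(\alpha,\beta)$, namely the rank-one operators $T_w=\widetilde{k}_w^{\beta}\otimes k_w^{\alpha}$ with $w\in\overline{\mathbb{D}}$. Using $\alpha(a_p)=0=\beta(b_s)$, the reproducing property, and the identity $\langle \widetilde{k}_w^{\beta},\widetilde{k}_{b_s}^{\beta}\rangle=\langle k_{b_s}^{\beta},k_w^{\beta}\rangle$ (which holds because $C_{\beta}$ is an antilinear isometric involution), a direct calculation gives
\begin{equation*}
r_{s,p}(T_w)\;=\;\frac{1}{\overline{\beta'(b_s)}\,(1-\overline{a}_p w)(1-\overline{b}_s w)}.
\end{equation*}
Substituting this into each of (\ref{1})--(\ref{3}) and applying the partial-fraction identity
\begin{equation*}
\frac{\overline{a}-\overline{b}}{(1-\overline{a}w)(1-\overline{b}w)}\;=\;\frac{\overline{a}}{1-\overline{a}w}-\frac{\overline{b}}{1-\overline{b}w},
\end{equation*}
the terms on the right-hand side of each relation telescope and collapse to the left-hand side. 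In case (b) the hypothesis $a_i=b_i$ for $i\le l$ is used to kill the extra cross-terms that distinguish (\ref{2}) from (\ref{1}).

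For the converse, it suffices to show that the space of matrices $(r_{s,p})$ satisfying the stated relations has dimension at most $m+n-1$, since the forward direction then yields containment with $\mathscr{T}(\alpha,\beta)$, and Proposition \ref{dym} forces equality. In case (a) the relations (\ref{1}) are only substantive for $s,p\ge 2$, and there they express $(m-1)(n-1)$ entries in terms of the first row and first column, leaving $m+n-1$ free parameters. In case (b) one first observes that several instances of (\ref{2}) and (\ref{3}) become tautologies after substituting $a_i=b_i$ for $i\le l$ (for instance (\ref{3}) with $p=1$, and (\ref{2}) with $s=1$), while (\ref{2}) with $p=1$ reduces to the single equation $r_{s,1}=\overline{\beta'(b_1)}\,r_{1,s}/\overline{\beta'(b_s)}$ for $2\le s\le l$. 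After these reductions one is left with $(m-1)(n-1)$ genuine constraints, and the free entries turn out to be the full first row ($m$ entries), the lower portion $r_{s,1}$ with $l<s\le n$ of the first column ($n-l$ entries), and the diagonal entries $r_{s,s}$ with $2\le s\le l$ ($l-1$ entries), totalling $m+(n-l)+(l-1)=m+n-1$.

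The routine step is the partial-fraction verification in the forward direction; the main obstacle is the bookkeeping in case (b), where one must carefully identify which instances of (\ref{2}) and (\ref{3}) are vacuous and confirm that the claimed list of free entries really is a complete, linearly independent parametrisation of the solution space of the relations.
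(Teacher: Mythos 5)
Your argument is correct, and its overall architecture (verify the linear relations on generators of $\mathscr{T}(\alpha,\beta)$, then use Proposition \ref{dym} and a dimension count on the solution space of the relations to get the converse) is the same as the paper's; the difference lies in how you handle the necessity direction. The paper decomposes $A_{\varphi}^{\alpha,\beta}=A_{\overline{\chi}+\psi}^{\alpha,\beta}$ with $\chi\in K_{\alpha}$, $\psi\in K_{\beta}$, expands $\chi$ and $\psi$ in the conjugate kernel bases, and so works with $A=\sum_i\overline{c}_i\,k_{a_i}^{\beta}\otimes\widetilde{k}_{a_i}^{\alpha}+\sum_j d_j\,\widetilde{k}_{b_j}^{\beta}\otimes k_{b_j}^{\alpha}$; the resulting expression for $r_{s,p}$ then carries an extra ``diagonal'' term $\overline{c}_p\overline{\alpha'(a_p)}\langle k_{a_p}^{\beta},\widetilde{k}_{b_s}^{\beta}\rangle/\overline{\beta'(b_s)}$ that must be tracked through the partial-fraction telescoping alongside the sum over $j$, and which behaves differently for $p\leq l$ and $p>l$. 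You instead span $\mathscr{T}(\alpha,\beta)$ by the one-parameter family $\widetilde{k}_w^{\beta}\otimes k_w^{\alpha}$ from Corollary \ref{bazy}(a), for which each generator has the single clean entry formula $r_{s,p}=\bigl(\overline{\beta'(b_s)}(1-\overline{a}_pw)(1-\overline{b}_sw)\bigr)^{-1}$, so the same telescoping identity finishes the job uniformly in $p$ and $s$; this is in fact exactly the device the paper itself deploys later in the proof of Theorem \ref{macierz3} for Clark bases, transplanted back to the kernel bases. Your relation $\overline{\beta'(b_s)}\,r_{s,1}=\overline{\beta'(b_1)}\,r_{1,s}$ for $2\leq s\leq l$, which converts \eqref{3} into \eqref{2}, is precisely the identity the paper establishes at the end of its necessity argument. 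On the converse, your bookkeeping (which instances of \eqref{1}--\eqref{3} are tautologies, and the list of $m+n-1$ free entries: the first row, the diagonal entries $r_{s,s}$ for $2\leq s\leq l$, and $r_{s,1}$ for $l<s\leq n$) is actually more explicit than the paper's, which simply asserts that the space $V$ of matrices satisfying the relations has dimension $m+n-1$; your count agrees with the paper's Remark describing the determining entries. The only caveat is that your write-up is a plan rather than a full verification, but every step you indicate does go through.
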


\begin{proof}
We first prove the necessity of the conditions given in the theorem. Let $A=A_{\varphi}^{\alpha, \beta}$ be an asymmetric truncated Toeplitz operator with symbol $\varphi\in L^2(\partial\mathbb{D})$. Recall that the matrix representation $M_{A_{\varphi}^{\alpha, \beta}}=(r_{s,p})$ of $A_{\varphi}^{\alpha, \beta}$ with respect to the bases $\{k_{a_1}^{\alpha},\ldots, k_{a_m}^{\alpha} \}$ and $\{k_{b_1}^{\beta},\ldots, k_{b_n}^{\beta} \}$ is given by
\begin{displaymath}
r_{s,p}= (\overline{\beta'(b_s)})^{-1}\langle A_{\varphi}^{\alpha, \beta}k_{a_p}^{\alpha}, \widetilde{k}_{b_s}^{\beta}\rangle .
\end{displaymath}

By \cite[Cor. 2.6]{blicharz1},  $A_{\varphi}^{\alpha, \beta}$ can be written as
\begin{displaymath}
A_{\varphi}^{\alpha, \beta}=A_{\overline{\chi}+\psi}^{\alpha, \beta},
\end{displaymath}
for some $\chi \in K_{\alpha}$, $\psi \in K_{\beta}$. Since the functions $\widetilde{k}_{a_i}^{\alpha}$, $i=1,\ldots , m$, form a basis for $ K_{\alpha}$ and the functions $\widetilde{k}_{b_j}^{\beta}$, $j=1,\ldots , n$, form a basis for $ K_{\beta}$, we can write
\begin{displaymath}
\chi=\sum_{i=1}^{m}c_i\widetilde{k}_{a_i}^{\alpha},\quad \psi=\sum_{j=1}^{n}d_j\widetilde{k}_{b_j}^{\beta}.
\end{displaymath}
We now compute $r_{s,p}$ in terms of the scalars $c_1,\ldots,c_m$ and $d_1,\ldots,d_n$.

Since $\alpha(a_i)=0$ and $\beta(b_j)=0$, we have
\begin{displaymath}
\widetilde{k}_{a_i}^{\alpha}(z)=\frac{\alpha(z)}{z-a_i},\qquad  \widetilde{k}_{b_j}^{\beta}(z)=\frac{\beta(z)}{z-b_j}
\end{displaymath}
and
\begin{displaymath}
A_{\varphi}^{\alpha, \beta}=\sum_{i=1}^{m}\overline{c_i}A_{\frac{\overline{\alpha(z)}}{\overline{z}-\overline{a}_i}}^{\alpha, \beta}+\sum_{j=1}^{n}d_jA_{\frac{\beta(z)}{z-b_j}}^{\alpha, \beta}.
\end{displaymath}
By Proposition \ref{rankone}$(\mathrm{a})$,
\begin{displaymath}
A_{\frac{\overline{\alpha(z)}}{\overline{z}-\overline{a}_i}}^{\alpha, \beta}={k}_{a_i}^{\beta}\otimes\widetilde{k}_{a_i}^{\alpha}\quad  \mathrm{and}\quad A_{\frac{\beta(z)}{z-b_j}}^{\alpha, \beta}=\widetilde{k}_{b_j}^{\beta}\otimes{k}_{b_j}^{\alpha},
\end{displaymath}
and so
\begin{displaymath}
\begin{split}
A_{\varphi}^{\alpha, \beta}{k}_{a_p}^{\alpha}&=\sum_{i=1}^{m}\overline{c}_i\langle {k}_{a_p}^{\alpha},\widetilde{k}_{a_i}^{\alpha}\rangle {k}_{a_i}^{\beta}+\sum_{j=1}^{n}d_j\langle {k}_{a_p}^{\alpha},{k}_{b_j}^{\alpha}\rangle \widetilde{k}_{b_j}^{\beta}\\ &=\overline{c}_p\overline{\alpha'(a_p)}{k}_{a_p}^{\beta}+\sum_{j=1}^{n}\frac{d_j}{1-\overline{a}_pb_j}\widetilde{k}^{\beta}_{b_j}.
\end{split}
\end{displaymath}
The last equality follows from the fact that
$$\langle k_{a_p}^{\alpha},\widetilde{k}_{a_i}^{\alpha}\rangle=\left\{\begin{array}{cc} \overline{\alpha'(a_p)}&\mathrm{for}\ i= p,\\0&\mathrm{for}\ i\neq p.\end{array}\right.$$
Consequently,
\begin{displaymath}
\begin{split}
r_{s,p}&= (\overline{\beta'(b_s)})^{-1}\langle A_{\varphi}^{\alpha, \beta}k_{a_p}^{\alpha}, \widetilde{k}_{b_s}^{\beta}\rangle \\ &=\overline{c}_p\frac{\overline{\alpha'(a_p)}}{\overline{\beta'(b_s)}}\langle k_{a_p}^{\beta}, \widetilde{k}_{b_s}^{\beta}\rangle + \frac{1}{\overline{\beta'(b_s)}}\sum_{j=1}^{n}\frac{d_j}{1-\overline{a}_pb_j}\langle \widetilde{k}_{b_j}^{\beta},\widetilde{k}_{b_s}^{\beta}\rangle\\ &= \overline{c}_p\frac{\overline{\alpha'(a_p)}}{\overline{\beta'(b_s)}}\langle k_{a_p}^{\beta}, \widetilde{k}_{b_s}^{\beta}\rangle + \frac{1}{\overline{\beta'(b_s)}}\sum_{j=1}^{n}\frac{d_j}{(1-\overline{a}_pb_j)(1-\overline{b}_sb_j)}.
\end{split}
\end{displaymath}

$(\mathrm{a})$ $l=0$.

\vspace{0.1cm}

In this case $a_p\neq b_s$ for all $1\leq p\leq m$ and $1\leq s\leq n$. Therefore
$$
\langle k_{a_p}^{\beta}, \widetilde{k}_{b_s}^{\beta}\rangle=\frac{\overline{\beta(a_p)}}{\overline{a}_p-\overline{b}_s}\neq0
$$
and
\begin{displaymath}
r_{s,p}=\frac{\overline{c}_p}{\overline{a}_p-\overline{b}_s}\frac{\overline{\alpha'(a_p)}}{\overline{\beta'(b_s)}}\overline{\beta(a_p)}+ \frac{1}{\overline{\beta'(b_s)}}\sum_{j=1}^{n}\frac{d_j}{(1-\overline{a}_pb_j)(1-\overline{b}_sb_j)}
\end{displaymath}
for all $1\leq p\leq m$, $1\leq s\leq n$.

We now show that $r_{s,p}$ satisfies \eqref{1} for all $1\leq s\leq n$ and $1\leq p\leq m$. Clearly, \eqref{1} holds for $s=1$. Assume that $s\neq 1$.
Using the equality
 \begin{displaymath}
\frac{\overline{a}_p-\overline{b}_s}{(1-\overline{a}_pb_j)(1-\overline{b}_sb_j)}=\frac{\overline{a}_p}{1-\overline{a}_pb_j}-\frac{\overline{b}_s}{1-\overline{b}_sb_j},
\end{displaymath}
we get
\begin{displaymath}
\begin{split}
\sum_{j=1}^{n}&\frac{d_j}{(1-\overline{a}_pb_j)(1-\overline{b}_sb_j)}=\sum_{j=1}^{n}\frac{d_j}{\overline{a}_p-\overline{b}_s}\left( \frac{\overline{a}_p}{1-\overline{a}_pb_j}-\frac{\overline{b}_s}{1-\overline{b}_sb_j}\right)\\
=&\sum_{j=1}^{n}\frac{d_j}{\overline{a}_p-\overline{b}_s} \left( \frac{\overline{a}_p-\overline{b}_1}{(1-\overline{a}_pb_j)(1-\overline{b}_1b_j)}+\frac{\overline{b}_1-\overline{a}_1}{(1-\overline{a}_1b_j)(1-\overline{b}_1b_j)}\right.\\
&\left.+
\frac{\overline{a}_1-\overline{b}_s}{(1-\overline{a}_1b_j)(1-\overline{b}_sb_j)} \right )\\
=& \frac{\overline{a}_p-\overline{b}_1}{\overline{a}_p-\overline{b}_s}\sum_{j=1}^{n}\frac{d_j}{(1-\overline{a}_pb_j)(1-\overline{b}_1b_j)}+\frac{\overline{b}_1-\overline{a}_1}{\overline{a}_p-\overline{b}_s}\sum_{j=1}^{n}\frac{d_j}{(1-\overline{a}_1b_j)(1-\overline{b}_1b_j)}
\\&+\frac{\overline{a}_1-\overline{b}_s}{\overline{a}_p-\overline{b}_s}\sum_{j=1}^{n}\frac{d_j}{(1-\overline{a}_1b_j)(1-\overline{b}_sb_j)}.
\end{split}
\end{displaymath}
It follows that
\begin{displaymath}
\begin{split}
r_{s,p}&=\frac{\overline{c}_p}{\overline{a}_p-\overline{b}_s}\frac{\overline{\alpha'(a_p)}}{\overline{\beta'(b_s)}}\overline{\beta(a_p)}+\frac{1}{\overline{\beta'(b_s)}}\sum_{j=1}^{n}\frac{d_j}{(1-\overline{a}_pb_j)(1-\overline{b}_sb_j)}
\\&= \frac{\overline{c}_p}{\overline{a}_p-\overline{b}_s}\frac{\overline{\alpha'(a_p)}}{\overline{\beta'(b_s)}}\overline{\beta(a_p)}+\frac{\overline{a}_p-\overline{b}_1}{\overline{a}_p-\overline{b}_s}\frac{1}{\overline{\beta'(b_s)}}\sum_{j=1}^{n}\frac{d_j}{(1-\overline{a}_pb_j)(1-\overline{b}_1b_j)} \\&\quad- \frac{\overline{c}_1}{\overline{a}_p-\overline{b}_s}\frac{\overline{\alpha'(a_1)}}{\overline{\beta'(b_s)}}\overline{\beta(a_1)}+\frac{\overline{b}_1-\overline{a}_1}{\overline{a}_p-\overline{b}_s}\frac{1}{\overline{\beta'(b_s)}}\sum_{j=1}^{n}\frac{d_j}{(1-\overline{a}_1b_j)(1-\overline{b}_1b_j)}\\
&\quad+ \frac{\overline{c}_1}{\overline{a}_p-\overline{b}_s}\frac{\overline{\alpha'(a_1)}}{\overline{\beta'(b_s)}}\overline{\beta(a_1)}+\frac{\overline{a}_1-\overline{b}_s}{\overline{a}_p-\overline{b}_s}\frac{1}{\overline{\beta'(b_s)}}\sum_{j=1}^{n}\frac{d_j}{(1-\overline{a}_1b_j)(1-\overline{b}_sb_j)}\\
&=\frac{\overline{a}_p-\overline{b}_1}{\overline{a}_p-\overline{b}_s}\frac{\overline{\beta'(b_1)}}{\overline{\beta'(b_s)}}r_{1,p}+\frac{\overline{b}_1-\overline{a}_1}{\overline{a}_p-\overline{b}_s}\frac{\overline{\beta'(b_1)}}{\overline{\beta'(b_s)}}r_{1,1}+\frac{\overline{a}_1-\overline{b}_s}{\overline{a}_p-\overline{b}_s}r_{s,1}
\\ &=\frac{\overline{\beta'(b_s)}(\overline{a}_1-\overline{b}_s)r_{s,1}+\overline{\beta'(b_1)}(\overline{b}_1-\overline{a}_1)r_{1,1}+\overline{\beta'(b_1})(\overline{a}_p-\overline{b}_1)r_{1,p}}{\overline{\beta'(b_s)}(\overline{a}_p-\overline{b}_s)}.
\end{split}
\end{displaymath}

$(\mathrm{b})$ $l>0$.

\vspace{0.1cm}

In this case $a_p=b_p$ for $p\leq l$ and $a_p\neq b_s$ for $p>l$ and every $1\leq s\leq n$. Hence
\begin{displaymath}
\langle k_{a_p}^{\beta}, \widetilde{k}_{b_s}^{\beta}\rangle=\left\{\begin{array}{cc}
\overline{\beta'(b_s)}\delta_{s,p}& \mathrm{for}\quad p\leq l,\\
 \frac{\overline{\beta(a_p)}}{\overline{a}_p-\overline{b}_s} & \mathrm{for}\quad p>l,
\end{array}\right.
\end{displaymath}
and
\begin{displaymath}
r_{s,p}=\overline{c}_p\overline{\alpha'(a_p)}\delta_{s,p}+ \frac{1}{\overline{\beta'(b_s)}}\sum_{j=1}^{n}\frac{d_j}{(1-\overline{a}_pb_j)(1-\overline{b}_sb_j)}
\end{displaymath}
for $p\leq l$, $1\leq s\leq n$, and
\begin{displaymath}
r_{s,p}=\frac{\overline{c}_p}{\overline{a}_p-\overline{b}_s}\frac{\overline{\alpha'(a_p)}}{\overline{\beta'(b_s)}}\overline{\beta(a_p)}+ \frac{1}{\overline{\beta'(b_s)}}\sum_{j=1}^{n}\frac{d_j}{(1-\overline{a}_pb_j)(1-\overline{b}_sb_j)}
\end{displaymath}
for $p>l$, $1\leq s\leq n$.

We now show that $r_{s,p}$ satisfies \eqref{3} for all $p,s$ such that $1\leq s\leq n$, $p>l$ or $1\leq s\leq n$, $1\leq p\leq l$, $p\neq s$.

Clearly, \eqref{3} holds for all $p,s$ with $p\neq s$ and such that $s=1$ or $p=1$. Assume that $s\neq1$ and $p\neq 1$. If $p>l$, then using the fact that $a_1=b_1$, we get
\begin{displaymath}
\begin{split}
r_{s,p}&=\frac{\overline{c}_p}{\overline{a}_p-\overline{b}_s}\frac{\overline{\alpha'(a_p)}}{\overline{\beta'(b_s)}}\overline{\beta(a_p)}+\frac{1}{\overline{\beta'(b_s)}}\sum_{j=1}^{n}\frac{d_j}{(1-\overline{a}_pb_j)(1-\overline{b}_sb_j)}
\\ &= \frac{\overline{c}_p}{\overline{a}_p-\overline{b}_s}\frac{\overline{\alpha'(a_p)}}{\overline{\beta'(b_s)}}\overline{\beta(a_p)}+\frac{1}{\overline{\beta'(b_s)}}\sum_{j=1}^{n}\frac{d_j}{\overline{a}_p-\overline{b}_s}\left( \frac{\overline{a}_p-\overline{b}_1}{(1-\overline{a}_pb_j)(1-\overline{b}_1b_j)}\right.\\
&\quad+\left.\frac{\overline{a}_1-\overline{b}_s}{(1-\overline{a}_1b_j)(1-\overline{b}_sb_j)}\right)\\
&=\frac{\overline{a}_p-\overline{b}_1}{\overline{a}_p-\overline{b}_s}\frac{\overline{\beta'(b_1)}}{\overline{\beta'(b_s)}}\left(\frac{\overline{c}_p}{\overline{a}_p-\overline{b}_1}\frac{\overline{\alpha'(a_p)}}{\overline{\beta'(b_1)}}\overline{\beta(a_p)}+\frac{1}{\overline{\beta'(b_1)}}\sum_{j=1}^{n}\frac{d_j}{(1-\overline{a}_pb_j)(1-\overline{b}_1b_j)}\right)\\
&\quad+\frac{\overline{a}_1-\overline{b}_s}{\overline{a}_p-\overline{b}_s}\frac{1}{\overline{\beta'(b_s)}}\sum_{j=1}^{n}\frac{d_j}{(1-\overline{a}_1b_j)(1-\overline{b}_sb_j)}\\
&=\frac{\overline{a}_p-\overline{b}_1}{\overline{a}_p-\overline{b}_s}\frac{\overline{\beta'(b_1)}}{\overline{\beta'(b_s)}}r_{1,p}+\frac{\overline{a}_1-\overline{b}_s}{\overline{a}_p-\overline{b}_s}r_{s,1}=\frac{\overline{\beta'(b_s)}(\overline{a}_1-\overline{b}_s)r_{s,1}+\overline{\beta'(b_1)}(\overline{a}_p-\overline{b}_1)r_{1,p}}{\overline{\beta'(b_s)}(\overline{a}_p-\overline{b}_s)}.
\end{split}
\end{displaymath}
Similarly, if $p\leq l$, $s\neq p$, then
\begin{displaymath}
\begin{split}
r_{s,p}&=\frac{1}{\overline{\beta'(b_s)}}\sum_{j=1}^{n}\frac{d_j}{(1-\overline{a}_pb_j)(1-\overline{b}_sb_j)}
\\&=\frac{\overline{a}_p-\overline{b}_1}{\overline{a}_p-\overline{b}_s}\frac{\overline{\beta'(b_1)}}{\overline{\beta'(b_s)}}\frac{1}{\overline{\beta'(b_1)}}\sum_{j=1}^{n}\frac{d_j}{(1-\overline{a}_pb_j)(1-\overline{b}_1b_j)}\\
&\quad+\frac{\overline{a}_1-\overline{b}_s}{\overline{a}_p-\overline{b}_s}\frac{1}{\overline{\beta'(b_s)}}\sum_{j=1}^{n}\frac{d_j}{(1-\overline{a}_1b_j)(1-\overline{b}_sb_j)}\\
&=\frac{\overline{a}_p-\overline{b}_1}{\overline{a}_p-\overline{b}_s}\frac{\overline{\beta'(b_1)}}{\overline{\beta'(b_s)}}r_{1,p}+\frac{\overline{a}_1-\overline{b}_s}{\overline{a}_p-\overline{b}_s}r_{s,1}\\
&=\frac{\overline{\beta'(b_s)}(\overline{a}_1-\overline{b}_s)r_{s,1}+\overline{\beta'(b_1)}(\overline{a}_p-\overline{b}_1)r_{1,p}}{\overline{\beta'(b_s)}(\overline{a}_p-\overline{b}_s)}.
\end{split}
\end{displaymath}

In particular, \eqref{3} holds for all $p,s$ such that $l<s\leq n$, $1\leq p\leq m$.

To complete this part of the proof we need to show that $r_{s,p}$ satisfies \eqref{2} for all $p,s$ such that $1\leq s\leq l$, $1\leq p\leq m$, $s\neq p$. Again, this is obvious for $s=1$. If $1< s\leq l$,  then
\begin{displaymath}
\begin{split}
r_{s,1}&=\frac{1}{\overline{\beta'(b_s)}}\sum_{j=1}^{n}\frac{d_j}{(1-\overline{a}_1b_j)(1-\overline{b}_sb_j)}\\
&=\frac{1}{\overline{\beta'(b_s)}}\sum_{j=1}^{n}\frac{d_j}{(1-\overline{a}_sb_j)(1-\overline{b}_1b_j)}=\frac{\overline{\beta'(b_1)}}{\overline{\beta'(b_s)}}r_{1,s}
\end{split}
\end{displaymath}
and \eqref{2} follows from \eqref{3}.

The proof of necessity of the given conditions (for $l=0$ and for $l>0$) is now complete. We now prove sufficiency.

Assume that $l=0$ and note that the linear space $V$ of all the matrices satisfying \eqref{1} has dimension $m+n-1$. By the first part of the proof, the set $V_0$ of all the matrices representing operators from $\mathscr{T}(\alpha,\beta)$  is a subspace of $V$,
\begin{displaymath}
V_0=\{M_{A_{\varphi}^{\alpha, \beta}}\colon\ A_{\varphi}^{\alpha, \beta}\in \mathscr{T}(\alpha,\beta)\}\subset V.
\end{displaymath}
However, by Proposition \ref{dym} we know that $V_0$ also has dimension $m+n-1$, and so $V_0=V$.

The proof for $l>0$ is analogous.
\end{proof}

\begin{rem}
\begin{itemize}
\item[(a)] Theorem \ref{macierz} states that if $\alpha$ and $\beta$ have no common zeros ($l=0$), then the matrix representing an operator from $\mathcal{T}(\alpha,\beta)$ is determined by the entries along the first row and the first column. A slight modification of the proof shows that one can in fact take any other row and any other column.
\item[(b)] Note that in the proof of part $(\mathrm{b})$ of Theorem \ref{macierz} we actually showed that the elements $r_{s,p}$ satisfy
\begin{equation*}
r_{s,p}=\frac{\overline{\beta'(b_s)}(\overline{a}_1-\overline{b}_s)r_{s,1}+\overline{\beta'(b_1)}(\overline{a}_p-\overline{b}_1)r_{1,p}}{\overline{\beta'(b_s)}(\overline{a}_p-\overline{b}_s)}
\end{equation*}
for all $p,s$ such that $1\leq s\leq n$, $p>l$ or $1\leq s\leq n$, $1\leq p\leq l$, $p\neq s$. However, this only says that the matrix representing an asymmetric truncated Toeplitz operator is determined by $m+n+l-2$ of its entries, which is more that $m+n-1$ for $l>1$. To reduce the number of the determining entries we consider two equations: \eqref{2} and \eqref{3}. These equations say that the matrix is determined by entries along the first row, first $l$ entries along the main diagonal and last $n-l$ entries along the first column.
\item[(c)] A modification of the proof of part $(\mathrm{b})$ of Theorem \ref{macierz} shows that the first row and column can be replaced by any other row and column that intersect at one of the first $l$ elements of the main diagonal. The theorem can also be formulated with rows in place of the columns and vice versa.
\item[(d)] Note that if $\alpha=\beta$ is a Blaschke product with $m$ distinct zeros, then $l=m=n$ and part (b) of Theorem \ref{macierz} is precisely the result obtained in \cite[Thm. 1.4]{w}.
\end{itemize}
\end{rem}

Theorem \ref{macierz} can also be formulated in terms of the matrix representation with respect to $\{\widetilde{k}_{a_1}^{\alpha},\ldots, \widetilde{k}_{a_m}^{\alpha} \}$ and $\{\widetilde{k}_{b_1}^{\beta},\ldots, \widetilde{k}_{b_n}^{\beta} \}$.

\begin{thm}
Let the function $\alpha$ be a finite Blaschke product with $m$ distinct zeros $a_1,\ldots, a_m$, let $\beta$ be a finite Blaschke product with $n$ distinct zeros $b_1,\ldots, b_n$ and assume that $\alpha$ and $\beta$ have precisely $l$ zeros in common: $a_i=b_i$ for $i\leq l$ ($l=0$ if there are no zeros in common). Let $A$ be any linear transformation from $K_{\alpha}$ into $K_{\beta}$. If $\widetilde{M}_A=(t_{s,p})$ is the matrix representation of $A$ with respect to the bases
$\{\widetilde{k}_{a_1}^{\alpha},\ldots, \widetilde{k}_{a_m}^{\alpha} \}$ and $\{\widetilde{k}_{b_1}^{\beta},\ldots, \widetilde{k}_{b_n}^{\beta} \}$, and
  \vspace{0.2cm}
\begin{itemize}
\item[(a)]
$l=0 $, then $A\in \mathscr{T}(\alpha,\beta)$ if and only if
\begin{equation}\label{5}
t_{s,p}=\frac{\beta'(b_s)({a_1}-{b_s})t_{s,1}+{\beta'(b_1)}({b_1}-{a_1})t_{1,1}+\beta'(b_1)({a_p}-{b_1})t_{1,p}}{{\beta'(b_s)}({a_p}-{b_s})}
\end{equation} for all $1\leq  p\leq m$ and $1\leq s \leq n$;
  \vspace{0.5cm}
\item[(b)] $l>0$, then $A\in \mathscr{T}(\alpha,\beta)$ if and only if
\begin{equation*}%\label{6}
t_{s,p}=\frac{\beta'(b_1)({a_1}-{b_s})t_{1,s}+\beta'(b_1)({a_p}-{b_1})t_{1,p}}{{\beta'(b_s)(a_p-b_s)}}
\end{equation*} for all $p,s$ such that $1\leq p \leq m$, $1\leq  s\leq l$, $s\neq p$, and
\begin{equation*}%\label{7}
t_{s,p}=\frac{\beta'(b_s)({a_1}-{b_s})t_{s,1}+\beta'(b_1)({a_p}-{b_1})t_{1,p}}{{\beta'(b_s)}({a_p}-{b_s})}
\end{equation*} for all $p,s$ such that  $1\leq p \leq m$, $l<  s\leq n$.
\end{itemize}
\end{thm}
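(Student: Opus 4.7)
The plan is to deduce this theorem from Theorem~\ref{macierz} via passage to the adjoint. Since $(A_\varphi^{\alpha,\beta})^{*}=A_{\overline{\varphi}}^{\beta,\alpha}$, we have $A\in\mathscr{T}(\alpha,\beta)$ if and only if $A^{*}\in\mathscr{T}(\beta,\alpha)$, so it is enough to translate the statement about $(t_{s,p})$ into a statement about the kernel-basis matrix of $A^{*}$ and then invoke Theorem~\ref{macierz}, applied with the roles of $\alpha$ and $\beta$, and of the $a_i$ and $b_j$, interchanged (the integer $l$ of common zeros is preserved under this swap since the set of common zeros is symmetric).

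Concretely, let $(q_{p,s})$ denote the matrix of $A^{*}$ with respect to the kernel bases $\{k_{b_1}^{\beta},\ldots,k_{b_n}^{\beta}\}$ and $\{k_{a_1}^{\alpha},\ldots,k_{a_m}^{\alpha}\}$. Since $\widetilde{k}_{a_p}^{\alpha}(z)=\alpha(z)/(z-a_p)$ and $\alpha(a_{p'})=0$ for every $p'$, the reproducing property yields $\langle \widetilde{k}_{a_p}^{\alpha},k_{a_{p'}}^{\alpha}\rangle=\alpha'(a_p)\delta_{p,p'}$, and a short calculation gives
$$t_{s,p}\,\beta'(b_s)\;=\;\langle A\widetilde{k}_{a_p}^{\alpha},k_{b_s}^{\beta}\rangle\;=\;\langle \widetilde{k}_{a_p}^{\alpha},A^{*}k_{b_s}^{\beta}\rangle\;=\;\overline{q_{p,s}}\,\alpha'(a_p),$$
that is, $t_{s,p}=\frac{\alpha'(a_p)}{\beta'(b_s)}\,\overline{q_{p,s}}$. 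I would then conjugate the formulas of Theorem~\ref{macierz} applied to $A^{*}$ and substitute $\overline{q_{p,s}}=\frac{\beta'(b_s)}{\alpha'(a_p)}\,t_{s,p}$. The factors $\alpha'(a_p)$ cancel and every conjugate bar on $\overline{a}_i$, $\overline{b}_j$, $\overline{\alpha'(\cdot)}$, $\overline{\beta'(\cdot)}$ disappears, producing exactly \eqref{5} in case (a), and the two displayed equations in case (b), with $t_{1,1},t_{1,p},t_{s,1}$ taking the places of the anchor entries $r_{1,1},r_{1,p},r_{s,1}$ from Theorem~\ref{macierz}.

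The main bookkeeping nuisance is verifying that the $\alpha\leftrightarrow\beta$ swap in Theorem~\ref{macierz}(b) transports the determining-set structure (``first row plus first $l$ diagonal entries plus last $n-l$ entries of the first column'') correctly, rather than inadvertently exchanging the roles of rows and columns; a careful tracking of indices confirms that it does. As an alternative route, one could mimic the proof of Theorem~\ref{macierz} directly, decomposing $A=A_{\overline{\chi}+\psi}^{\alpha,\beta}$ with $\chi\in K_{\alpha}$, $\psi\in K_{\beta}$ and expanding $A\widetilde{k}_{a_p}^{\alpha}$ via Proposition~\ref{rankone}. The telescoping is then driven by the partial-fraction identity $\frac{a_p-b_s}{(1-a_p\overline{a}_i)(1-b_s\overline{a}_i)}=\frac{a_p}{1-a_p\overline{a}_i}-\frac{b_s}{1-b_s\overline{a}_i}$ in the variable $\overline{a}_i$ (in place of the $b_j$-variable identity used in the original proof), and sufficiency follows once again from the dimension count provided by Proposition~\ref{dym}.
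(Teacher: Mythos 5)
Your proposal is correct and follows essentially the same route as the paper: reduce to Theorem~\ref{macierz} via the adjoint (using that $A\in\mathscr{T}(\alpha,\beta)$ iff $A^{*}\in\mathscr{T}(\beta,\alpha)$), relate the conjugate-kernel matrix to the kernel-basis matrix of $A^{*}$ by $t_{s,p}=\frac{\alpha'(a_p)}{\beta'(b_s)}\overline{q_{p,s}}$, and substitute into the conjugated formulas with $\alpha$ and $\beta$ interchanged. The paper likewise treats only $l=0$ in detail and leaves the $l>0$ bookkeeping to the reader, so your remarks on the determining-set structure are consistent with it.
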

\begin{proof}
Assume first that $l=0$. Let $A$ be any linear transformation from $K_{\alpha}$ into $K_{\beta}$ and let $\widetilde{M}_A=(t_{s,p})$ be the matrix representation of $A$ with respect to $\{\widetilde{k}_{a_1}^{\alpha},\ldots, \widetilde{k}_{a_m}^{\alpha} \}$ and $\{\widetilde{k}_{b_1}^{\beta},\ldots, \widetilde{k}_{b_n}^{\beta} \}$. By \cite[Lem. 3.2]{ptak}, $A\in \mathscr{T}(\alpha,\beta)$ if and only if  $A^{*}\in \mathscr{T}(\beta,\alpha)$. Theorem \ref{macierz}(a) implies that the latter holds if and only if
\begin{equation}\label{8}
r_{s,p}=\frac{\overline{\alpha'(a_s)}(\overline{b}_1-\overline{a}_s)r_{s,1}+\overline{\alpha'(a_1)}(\overline{a}_1-\overline{b}_1)r_{1,1}+\overline{\alpha'(a_1})(\overline{b}_p-\overline{a}_1)r_{1,p}}{\overline{\alpha'(a_s)}(\overline{b}_p-\overline{a}_s)}
\end{equation} for all $1\leq s \leq m$ and $1\leq  p\leq n$, where $M_{A^{*}}=(r_{s,p})$ is the matrix representation of $A^{*}$ with respect to $\{k_{b_1}^{\beta},\ldots, k_{b_n}^{\beta} \}$ and $\{k_{a_1}^{\alpha},\ldots, k_{a_m}^{\alpha} \}$.

We show that $M_{A^{*}}$ satisfies \eqref{8} if and only if $\widetilde{M}_A$ satisfies \eqref{5}. To this end, we note that
\begin{displaymath}
\begin{split}
t_{s,p}&= {\beta'(b_s)}^{-1}\langle A \widetilde{k}_{a_p}^{\alpha}, {k}_{b_s}^{\beta}\rangle \\&= {\beta'(b_s)}^{-1}\overline{\langle A^{*}k_{b_s}^{\beta}, \widetilde{k}_{a_p}^{\alpha}\rangle}=\frac{\alpha'(a_p)}{\beta'(b_s)}\overline{r}_{p,s} .
\end{split}
\end{displaymath}
It follows that if $M_{A^{*}}$ satisfies \eqref{8}, then
\begin{displaymath}
\begin{split}
t_{s,p}&=\frac{\alpha'(a_p)}{\beta'(b_s)}\frac{\alpha'(a_1)({b_s}-{a_1})\overline{r}_{1,s}+{\alpha'(a_1)}({a_1}-{b_1})\overline{r}_{1,1}+{\alpha'(a_p)}({b_1}-{a_p})\overline{r}_{p,1}}{{\alpha'(a_p)}({b_s}-{a_p})}\\ &=
\frac{({a_1}-{b_s})\alpha'(a_1)\overline{r}_{1,s}
+({b_1}-{a_1})\alpha'(a_1)\overline{r}_{1,1}
+({a_p}-{b_1})\alpha'(a_p)\overline{r}_{p,1}}{{\beta'(b_s)}({a_p}-{b_s})}\\ &=\frac{\beta'(b_s)({a_1}-{b_s})t_{s,1}+\beta'(b_1)({b_1}-{a_1})t_{1,1}+\beta'(b_1)({a_p}-{b_1})t_{1,p}}{{\beta'(b_s)}({a_p}-{b_s})}.
\end{split}
\end{displaymath}

Similarly, if $\widetilde{M}_A$ satisfies \eqref{5}, then $M_{A^{*}}$ satisfies \eqref{8}. This completes the proof for the case $l=0$.

The other case is left to the reader.
\end{proof}

\subsection{Clark bases and modified Clark bases}

Now let $\alpha$ and $\beta$ be as in \eqref{blaszki} but do not assume that the zeros are distinct.

For any $\lambda_1\in\partial\mathbb{D}$ define
$$\alpha_{\lambda_1}=\frac{\lambda_1+\alpha(0)}{1+\overline{\alpha(0)}\lambda_1}.$$
Then $\alpha_{\lambda_1}\in\partial\mathbb{D}$ and, since $|\alpha|=1$ and $|\alpha'|>0$ on $\partial\mathbb{D}$ (see \cite[p. 6]{gar}), the equation
\begin{equation}\label{alf}
\alpha(\eta)=\alpha_{\lambda_1}
\end{equation}
has precisely $m$ distinct solutions $\eta_1,\ldots,\eta_m$, which lie on the unit circle $\partial\mathbb{D}$. The corresponding kernel functions $k_{\eta_1}^{\alpha},\ldots,k_{\eta_m}^{\alpha}$ belong to $K_{\alpha}$. Moreover,
\begin{equation*}
\langle k_{\eta_i}^{\alpha},k_{\eta_j}^{\alpha}\rangle=\left\{\begin{array}{cc}\|k_{\eta_i}^{\alpha}\|^2&\mathrm{for}\ i=j,\\0&\mathrm{for}\ i\neq j.\end{array}\right.
\end{equation*}
Therefore the functions $k_{\eta_1}^{\alpha},\ldots,k_{\eta_m}^{\alpha}$ form an orthogonal basis for $K_{\alpha}$ and the normalized kernel functions
\begin{equation*}
v_{\eta_j}^{\alpha}=\|k_{\eta_j}^{\alpha}\|^{-1}k_{\eta_j}^{\alpha},\quad j=1,\ldots,m,
\end{equation*}
 form an orthonormal basis for $K_{\alpha}$. The basis $\{v_{\eta_1}^{\alpha},\ldots,v_{\eta_m}^{\alpha}\}$ is called the Clark basis corresponding to $\lambda_1$ (see \cite{c} and \cite{gar} for more details).

 We can also define the so-called modified Clark basis corresponding to $\lambda_1$ by
\begin{equation*}
e_{\eta_j}^{\alpha}=\omega_{j}^{\alpha}v_{\eta_j}^{\alpha},\quad j=1,\ldots,m,
\end{equation*}
where
\begin{equation*}
\omega_{j}^{\alpha}=e^{-\frac{i}{2}(\mathrm{arg}\eta_j-\mathrm{arg}\lambda_1)},\quad j=1,\ldots,m.
\end{equation*}
Then the basis $\{e_{\eta_1}^{\alpha},\ldots,e_{\eta_m}^{\alpha}\}$ is an orthonormal basis for $K_{\alpha}$ and such that $$C_{\alpha}e_{\eta_j}^{\alpha}=e_{\eta_j}^{\alpha},\quad j=1,\ldots,m,$$
where $C_{\alpha}$ is the conjugation given by \eqref{numerek3}.

Similarly, for any $\lambda_2\in\partial\mathbb{D}$ there are precisely $n$ distinct solutions $\zeta_1,\ldots,\zeta_n$ on $\partial\mathbb{D}$ of the equation
\begin{equation}\label{bet}
\beta(\zeta)=\beta_{\lambda_2}=\frac{\lambda_2+\beta(0)}{1+\overline{\beta(0)}\lambda_2}.
\end{equation}
The Clark basis $\{v_{\zeta_1}^{\beta},\ldots,v_{\zeta_n}^{\beta}\}$ and modified Clark basis $\{e_{\zeta_1}^{\beta},\ldots,e_{\zeta_n}^{\beta}\}$ corresponding to $\lambda_2$ are defined as above by
\begin{equation*}
v_{\zeta_j}^{\beta}=\|k_{\zeta_j}^{\beta}\|^{-1}k_{\zeta_j}^{\beta},\quad j=1,\ldots,n,
\end{equation*}
and
\begin{equation*}
e_{\zeta_j}^{\beta}=\omega_{j}^{\beta}v_{\zeta_j}^{\beta},\quad j=1,\ldots,n,
\end{equation*}
where
\begin{equation*}
\omega_{j}^{\beta}=e^{-\frac{i}{2}(\mathrm{arg}\zeta_j-\mathrm{arg}\lambda_2)},\quad j=1,\ldots,n.
\end{equation*}

Of course, it may happen that the equations \eqref{alf} and \eqref{bet} have some solutions in common. Here we assume that \eqref{alf} and \eqref{bet} have precisely $l$ solutions in common ($l=0$ if there are no solutions in common), these solutions being $\eta_j=\zeta_j$ for $j\leq l$.

\begin{thm}\label{macierz3}
Let $\alpha$ and $\beta$ be two finite Blaschke products of degree $m>0$ and $n>0$, respectively. Let $\{v_{\eta_1}^{\alpha},\ldots, v_{\eta_m}^{\alpha} \}$ be the Clark basis for $K_{\alpha}$ corresponding to $\lambda_1\in\partial\mathbb{D}$, let $\{v_{\zeta_1}^{\beta},\ldots, v_{\zeta_n}^{\beta} \}$ be the Clark basis for $K_{\beta}$ corresponding to $\lambda_2\in\partial\mathbb{D}$ and assume that the sets $\{\eta_1,\ldots,\eta_m\}$, $\{\zeta_1,\ldots,\zeta_n\}$ have precisely $l$ elements in common: $\eta_j=\zeta_j$ for $j\leq l$ ($l=0$ if there are no elements in common). Finally, let $A$ be any linear transformation from $K_{\alpha}$ into $K_{\beta}$. If $M_A=(r_{s,p})$ is the matrix representation of $A$ with respect to the bases $\{v_{\eta_1}^{\alpha},\ldots, v_{\eta_m}^{\alpha} \}$ and $\{v_{\zeta_1}^{\beta},\ldots, v_{\zeta_n}^{\beta} \}$, and
  \vspace{0.2cm}
\begin{itemize}
\item[(a)]$l=0 $, then $A\in \mathscr{T}(\alpha,\beta)$ if and only if
\begin{equation}\label{c1}
\begin{split}
r_{s,p}=&\left(\frac{\sqrt{|\alpha'(\eta_1)|}}{\sqrt{|\alpha'(\eta_p)|}}\frac{\eta_p}{\eta_1}\frac{\eta_1-\zeta_s}{\eta_p-\zeta_s}r_{s,1}+\frac{\sqrt{|\alpha'(\eta_1)|}\sqrt{|\beta'(\zeta_1)|}}{\sqrt{|\alpha'(\eta_p)|}\sqrt{|\beta'(\zeta_s)|}}\frac{\eta_p}{\eta_1}\frac{\zeta_1-\eta_1}{\eta_p-\zeta_s}r_{1,1}\right.
\\&\phantom{=\frac1{\sqrt{|\alpha'(\eta_p)|}}}+\left.\frac{\sqrt{|\beta'(\zeta_1)|}}{\sqrt{|\beta'(\zeta_s)|}}\frac{\eta_p-\zeta_1}{\eta_p-\zeta_s}r_{1,p}\right)
\end{split}
\end{equation} for all $1\leq p \leq m$ and $1\leq  s\leq n$;
\vspace{0.5cm}
\item[(b)]$l>0$, then $A\in \mathscr{T}(\alpha,\beta)$ if and only if
\begin{equation}\label{c2}
r_{s,p}=\left(\frac{\sqrt{|\alpha'(\eta_s)|}\sqrt{|\beta'(\zeta_1)|}}{\sqrt{|\alpha'(\eta_p)|}\sqrt{|\beta'(\zeta_s)|}}\frac{\eta_p}{\eta_s}\frac{\eta_1-\zeta_s}{\eta_p-\zeta_s}r_{1,s}+\frac{\sqrt{|\beta'(\zeta_1)|}}{\sqrt{|\beta'(\zeta_s)|}}\frac{\eta_p-\zeta_1}{\eta_p-\zeta_s}r_{1,p}\right)
\end{equation} for all $p,s$ such that $1\leq p \leq m$, $1\leq  s\leq l$, $s\neq p$, and
\begin{equation}\label{c3}
r_{s,p}=\left(\frac{\sqrt{|\alpha'(\eta_1)|}}{\sqrt{|\alpha'(\eta_p)|}}\frac{\eta_p}{\eta_1}\frac{\eta_1-\zeta_s}{\eta_p-\zeta_s}r_{s,1}+\frac{\sqrt{|\beta'(\zeta_1)|}}{\sqrt{|\beta'(\zeta_s)|}}\frac{\eta_p-\zeta_1}{\eta_p-\zeta_s}r_{1,p}\right)
\end{equation} for all $p,s$ such that $1\leq p \leq m$, $l<  s\leq n$.
\end{itemize}
\end{thm}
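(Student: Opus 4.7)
My plan is to verify the formulas on the basis of $\mathcal{T}(\alpha,\beta)$ supplied by Corollary \ref{bazy}, then close via a dimension count.

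Fix $w \in \mathbb{D}$ and let $B_w = \widetilde{k}_w^\beta \otimes k_w^\alpha$, which lies in $\mathcal{T}(\alpha,\beta)$ by Proposition \ref{rankone}(a). Since $B_w f = f(w)\widetilde{k}_w^\beta$ for $f \in K_\alpha$, using $\alpha(\eta_p) = \alpha_{\lambda_1}$ and $\beta(\zeta_s) = \beta_{\lambda_2}$ gives
\begin{equation*}
\langle B_w k_{\eta_p}^\alpha, k_{\zeta_s}^\beta\rangle = k_{\eta_p}^\alpha(w)\widetilde{k}_w^\beta(\zeta_s) = \frac{(1-\overline{\alpha_{\lambda_1}}\alpha(w))(\beta_{\lambda_2}-\beta(w))}{(1-\overline{\eta_p}w)(\zeta_s-w)}.
\end{equation*}
Applying the boundary identity $1-\overline{\eta}w = \overline{\eta}(\eta-w)$ to rewrite $1/(1-\overline{\eta}w) = \eta/(\eta-w)$, then cancelling the common factor $(1-\overline{\alpha_{\lambda_1}}\alpha(w))(\beta_{\lambda_2}-\beta(w))$, the coefficient $\eta_p$, and the square-root normalizations from both sides of the claimed formula reduces everything to a purely algebraic partial-fraction identity in $w$.

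For case (a), $l=0$, \eqref{c1} reduces to
\begin{equation*}
\frac{\eta_p-\zeta_s}{(\eta_p-w)(\zeta_s-w)} = \frac{\eta_1-\zeta_s}{(\eta_1-w)(\zeta_s-w)} + \frac{\zeta_1-\eta_1}{(\eta_1-w)(\zeta_1-w)} + \frac{\eta_p-\zeta_1}{(\eta_p-w)(\zeta_1-w)},
\end{equation*}
which is a telescope: both sides equal $\frac{1}{\zeta_s-w}-\frac{1}{\eta_p-w}$. For case (b), the equality $\eta_j=\zeta_j$ for $j\le l$ makes the middle term vanish when $s>l$, yielding the two-term telescope through $\eta_1=\zeta_1$ that corresponds to \eqref{c3}; and when $s\le l$ with $p\neq s$, the substitutions $\eta_s=\zeta_s$, $\eta_1=\zeta_1$ produce the analogous two-term telescope matching \eqref{c2}. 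Since Corollary \ref{bazy}(a) tells us that $\{B_{w_j}\}_{j=1}^{m+n-1}$ at $m+n-1$ distinct points of $\mathbb{D}$ forms a basis of $\mathcal{T}(\alpha,\beta)$, linearity gives necessity for every $A \in \mathcal{T}(\alpha,\beta)$.

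For sufficiency I would let $V$ denote the matrix space defined by the stated relations and count: in each case only $(m-1)(n-1)$ of the relations are genuine, the remaining ones (those with $s=1$ or $p=1$ when $\eta_1=\zeta_1$) being tautologies, so $\dim V = mn - (m-1)(n-1) = m+n-1$. Combining this with $\dim\mathcal{T}(\alpha,\beta) = m+n-1$ from Proposition \ref{dym} and the injectivity of $A\mapsto M_A$ upgrades the necessary containment to the desired equality. The main obstacle is the bookkeeping in case (b): one must check that \eqref{c2} and \eqref{c3} together (not \eqref{c3} alone) leave exactly the first row, the first $l$ diagonal entries, and the last $n-l$ entries of the first column as free parameters, in agreement with Remark (b) following Theorem \ref{macierz}.
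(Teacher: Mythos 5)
Your proof is correct and follows essentially the same route as the paper: both reduce the formulas to the telescoping partial-fraction identity for the kernels of the rank-one operators from Corollary \ref{bazy} and Proposition \ref{rankone}, and both close the sufficiency direction with the dimension count against Proposition \ref{dym}. The only (harmless) organizational difference is that you verify the linear relations on each basis element $\widetilde{k}_w^{\beta}\otimes k_w^{\alpha}$ with $w\in\mathbb{D}$ and invoke linearity, whereas the paper expands a general $A_{\varphi}^{\alpha,\beta}$ over boundary-point operators $k_{\xi_i}^{\beta}\otimes k_{\xi_i}^{\alpha}$ and manipulates the resulting sum.
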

\begin{proof}
Let $A$ be any linear transformation from $K_{\alpha}$ into $K_{\beta}$ and let $M_A=(r_{s,p})$ be its matrix representation with respect to the bases $\{v_{\eta_1}^{\alpha},\ldots, v_{\eta_m}^{\alpha} \}$ and $\{v_{\zeta_1}^{\beta},\ldots, v_{\zeta_n}^{\beta} \}$.

We first show that if $A$ belongs to $\mathcal{T}(\alpha,\beta)$, then $M_A$ has the desired properties.

Assume that $A=A_{\varphi}^{\alpha,\beta}$ for $\varphi\in L^2(\partial\mathbb{D})$. To compute $r_{s,p}$ pick $m+n-1$ distinct points $\xi_1,\ldots,\xi_{m+n-1}$ from $\partial\mathbb{D}$, different from $\eta_i$, $i=1,\ldots,m$, and from $\zeta_j$, $j=1,\ldots,n$. It follows form Corollary \ref{bazy} that the operators $k_{\xi_i}^{\beta}\otimes k_{\xi_i}^{\alpha}$, $i=1,\ldots,m+n-1$, form a basis for $\mathcal{T}(\alpha,\beta)$. Hence there exist scalars $c_1,\ldots,c_{m+n-1}$ such that
\begin{equation}\label{suma}
A_{\varphi}^{\alpha,\beta}=\sum_{i=1}^{m+n-1}c_i k_{\xi_i}^{\beta}\otimes k_{\xi_i}^{\alpha}.
\end{equation}
Since the Clark bases are orthonormal,
$$r_{s,p}=\langle A_{\varphi}^{\alpha,\beta}v_{\eta_p}^{\alpha},v_{\zeta_s}^{\beta}\rangle$$
for $1\leq p\leq m$  and $1\leq s\leq n$. We now compute $r_{s,p}$ in terms of $c_i$, $i=1,\ldots,m+n-1$. By \eqref{suma} we have
\begin{displaymath}
A_{\varphi}^{\alpha,\beta}v_{\eta_p}^{\alpha}=\sum_{i=1}^{m+n-1}c_i k_{\xi_i}^{\beta}\otimes k_{\xi_i}^{\alpha}(v_{\eta_p}^{\alpha})=\sum_{i=1}^{m+n-1}c_i v_{\eta_p}^{\alpha}(\xi_i)k_{\xi_i}^{\beta},
\end{displaymath}
and
\begin{displaymath}
\begin{split}
r_{s,p}&=\sum_{i=1}^{m+n-1}c_i v_{\eta_p}^{\alpha}(\xi_i)\overline{v_{\zeta_s}^{\beta}(\xi_i)}\\
&=\frac1{\|k_{\eta_p}^{\alpha}\|\|k_{\zeta_s}^{\beta}\|}\sum_{i=1}^{m+n-1}c_i\frac{1-\overline{\alpha(\eta_p)}\alpha(\xi_i)}{1-\overline{\eta}_p\xi_i}\frac{1-\beta(\zeta_s)\overline{\beta(\xi_i)}}{1-\zeta_s\overline{\xi}_i}\\
&=\frac{\eta_p}{\|k_{\eta_p}^{\alpha}\|\|k_{\zeta_s}^{\beta}\|}\sum_{i=1}^{m+n-1}c_i\xi_i\frac{(\overline{\alpha(\eta_p)}\alpha(\xi_i)-1)(1-\beta(\zeta_s)\overline{\beta(\xi_i)})}{(\xi_i-\eta_p)(\xi_i-\zeta_s)}.
\end{split}
\end{displaymath}
Recall that $\|k_{\eta_p}^{\alpha}\|=\sqrt{|\alpha'(\eta_p)|}$ and $\|k_{\zeta_s}^{\beta}\|=\sqrt{|\beta'(\zeta_s)|}$. Moreover, $\eta_p$ and $\zeta_s$ are solutions of \eqref{alf} and \eqref{bet}, respectively. Consequently, $\alpha(\eta_p)=\alpha_{\lambda_1}$, $p=1,\ldots,m$, $\beta(\zeta_s)=\beta_{\lambda_2}$, $s=1,\ldots,n$, and
\begin{equation}\label{er}
r_{s,p}=\frac{\eta_p}{\sqrt{|\alpha'(\eta_p)|}\sqrt{|\beta'(\zeta_s)|}}\sum_{i=1}^{m+n-1}\frac{d_i}{(\xi_i-\eta_p)(\xi_i-\zeta_s)},
\end{equation}
where
\begin{displaymath}
\begin{split}
d_i&=c_i\xi_i(\overline{\alpha(\eta_p)}\alpha(\xi_i)-1)(1-\beta(\zeta_s)\overline{\beta(\xi_i)})\\
&=c_i\xi_i(\overline{\alpha_{\lambda_1}}\alpha(\xi_i)-1)(1-\beta_{\lambda_2}\overline{\beta(\xi_i)})
\end{split}
\end{displaymath}
is independent of $p$ and $s$.

\vspace{0.1cm}

$(\mathrm{a})$ $l=0$.

\vspace{0.1cm}

In this case $\eta_p\neq\zeta_s$ for all $1\leq p\leq m$ and $1\leq s\leq n$. Using \eqref{er} and the equality
\begin{equation*}
\frac{\eta_p-\zeta_s}{(\xi_i-\eta_p)(\xi_i-\zeta_s)}=\frac1{\xi_i-\eta_p}-\frac1{\xi_i-\zeta_s}
\end{equation*}
we get
\begin{displaymath}
\begin{split}
r_{s,p}&=\frac{\eta_p}{\sqrt{|\alpha'(\eta_p)|}\sqrt{|\beta'(\zeta_s)|}}\sum_{i=1}^{m+n-1}\frac{d_i}{(\xi_i-\eta_p)(\xi_i-\zeta_s)}\\
&=\frac1{\sqrt{|\alpha'(\eta_p)|}\sqrt{|\beta'(\zeta_s)|}}\frac{\eta_p}{\eta_p-\zeta_s}\sum_{i=1}^{m+n-1}d_i\left(\frac1{\xi_i-\eta_p}-\frac1{\xi_i-\zeta_s}\right)\\
&=\frac1{\sqrt{|\alpha'(\eta_p)|}\sqrt{|\beta'(\zeta_s)|}}\frac{\eta_p}{\eta_p-\zeta_s}\sum_{i=1}^{m+n-1}d_i\left(\frac{\eta_p-\zeta_1}{(\xi_i-\eta_p)(\xi_i-\zeta_1)}\right.\\
&\quad+\left.\frac{\zeta_1-\eta_1}{(\xi_i-\zeta_1)(\xi_i-\eta_1)}+\frac{\eta_1-\zeta_s}{(\xi_i-\eta_1)(\xi_i-\zeta_s)}\right)\\
&=\frac1{\sqrt{|\alpha'(\eta_p)|}\sqrt{|\beta'(\zeta_s)|}}\frac{\eta_p}{\eta_p-\zeta_s}\left( \sqrt{|\alpha'(\eta_p)|}\sqrt{|\beta'(\zeta_1)|}\frac{\eta_p-\zeta_1}{\eta_p}r_{1,p}\right.\\
&\quad+\left.\sqrt{|\alpha'(\eta_1)|}\sqrt{|\beta'(\zeta_1)|}\frac{\zeta_1-\eta_1}{\eta_1}r_{1,1}+\sqrt{|\alpha'(\eta_1)|}\sqrt{|\beta'(\zeta_s)|}\frac{\eta_1-\zeta_s}{\eta_1}r_{s,1}\right)\\
&=\left(\frac{\sqrt{|\alpha'(\eta_1)|}}{\sqrt{|\alpha'(\eta_p)|}}\frac{\eta_p}{\eta_1}\frac{\eta_1-\zeta_s}{\eta_p-\zeta_s}r_{s,1}+\frac{\sqrt{|\alpha'(\eta_1)|}\sqrt{|\beta'(\zeta_1)|}}{\sqrt{|\alpha'(\eta_p)|}\sqrt{|\beta'(\zeta_s)|}}\frac{\eta_p}{\eta_1}\frac{\zeta_1-\eta_1}{\eta_p-\zeta_s}r_{1,1}\right.
\\&\quad+\left.\frac{\sqrt{|\beta'(\zeta_1)|}}{\sqrt{|\beta'(\zeta_s)|}}\frac{\eta_p-\zeta_1}{\eta_p-\zeta_s}r_{1,p}\right).
\end{split}
\end{displaymath}

Hence \eqref{c1} holds.

\vspace{0.1cm}

$(\mathrm{b})$ $l>0$.

\vspace{0.1cm}

In this case the proof of part $(\mathrm{a})$ can be repeated to show that \eqref{c1} holds for all $p,s$ such that $1\leq p\leq m$, $s>l$, and for all $p,s$ such that $1\leq p\leq m$, $1\leq s\leq l$, $s\neq p$. Since here $\zeta_1=\eta_1$, we get \eqref{c3}. %In particular, \eqref{c3} holds for $1\leq p\leq m$ and $s>l$.

We now show that \eqref{c2} holds for all $p,s$ such that $1\leq p\leq m$, $1\leq s\leq l$, $p\neq s$. Clearly, \eqref{c2} holds for $s=1$, $p\neq s$. Since here $\eta_s=\zeta_s$, it follows that for $1<s\leq l$,
\begin{displaymath}
\begin{split}
r_{s,1}&=\frac{\eta_1}{\sqrt{|\alpha'(\eta_1)|}\sqrt{|\beta'(\zeta_s)|}}\sum_{i=1}^{m+n-1}\frac{d_i}{(\xi_i-\eta_1)(\xi_i-\zeta_s)}\\
&=\frac{\eta_1}{\sqrt{|\alpha'(\eta_1)|}\sqrt{|\beta'(\zeta_s)|}}\sum_{i=1}^{m+n-1}\frac{d_i}{(\xi_i-\eta_s)(\xi_i-\zeta_1)}\\
&=\frac{\sqrt{|\alpha'(\eta_s)|}\sqrt{|\beta'(\zeta_1)|}}{\sqrt{|\alpha'(\eta_1)|}\sqrt{|\beta'(\zeta_s)|}}\frac{\eta_1}{\eta_s}r_{1,s}.
\end{split}
\end{displaymath}
Hence, for $1<s\leq l$, $p\neq s$, the equation \eqref{c2} follows form \eqref{c3}.

To complete the proof note that a matrix satisfying \eqref{c1} (or \eqref{c2} and \eqref{c3} in the case of $l>0$) is determined by $m+n-1$ entries and so the linear space $V$ of all such matrices has dimension $m+n-1$. As in the proof of Theorem \ref{macierz},
\begin{displaymath}
V_0=\{M_{A_{\varphi}^{\alpha, \beta}},A_{\varphi}^{\alpha, \beta}\in \mathscr{T}(\alpha,\beta)\}\subset V,
\end{displaymath}
and, since $V_0$ also has dimension $m+n-1$, we get $V_0=V$.
\end{proof}

\begin{rem}
\begin{itemize}
\item[(a)] Part $(\mathrm{a})$ of Theorem \ref{macierz3} states that the matrix representing an operator from $\mathcal{T}(\alpha,\beta)$ is determined by entries along the first row and the first column. The proof can be modified to show that one may replace the first row and the first column by any other row and any other column.
\item[(b)] Proof of part $(\mathrm{b})$ of Theorem \ref{macierz3} can also be modified to show that the first row and column can be replaced by any other row and column that intersect at one of the first $l$ elements of the main diagonal. The theorem can be formulated with rows in place of the columns and vice versa.
\item[(c)] If $\alpha=\beta$ is a Blaschke product of degeree $m$, then $l=m=n$. Moreover, if $\lambda_1=\lambda_2$, then $\eta_j=\zeta_j$ for all $j=1,\ldots, m$, and part (b) of Theorem \ref{macierz3} is precisely the result form \cite[Thm. 1.11]{w}.
\end{itemize}
\end{rem}

\begin{thm}%\label{macierz3}
Let $\alpha$ and $\beta$ be two finite Blaschke products of degree $m>0$ and $n>0$, respectively. Let $\{e_{\eta_1}^{\alpha},\ldots, e_{\eta_m}^{\alpha} \}$ be the modified Clark basis for $K_{\alpha}$ corresponding to $\lambda_1\in\partial\mathbb{D}$, let $\{e_{\zeta_1}^{\beta},\ldots, e_{\zeta_n}^{\beta} \}$ be the modified Clark basis for $K_{\beta}$ corresponding to $\lambda_2\in\partial\mathbb{D}$ and assume that the sets $\{\eta_1,\ldots,\eta_m\}$, $\{\zeta_1,\ldots,\zeta_n\}$ have precisely $l$ elements in common: $\eta_j=\zeta_j$ for $j\leq l$ ($l=0$ if there are no elements in common). Finally, let $A$ be any linear transformation from $K_{\alpha}$ into $K_{\beta}$. If $\widetilde{M}_A=(t_{s,p})$ is the matrix representation of $A$ with respect to the bases $\{e_{\eta_1}^{\alpha},\ldots, e_{\eta_m}^{\alpha} \}$ and $\{e_{\zeta_1}^{\beta},\ldots, e_{\zeta_n}^{\beta} \}$, and
  \vspace{0.2cm}
\begin{itemize}
\item[(a)] $l=0 $, then $A\in \mathscr{T}(\alpha,\beta)$ if and only if
\begin{equation}\label{c4}
\begin{split}
t_{s,p}=&\left(\frac{\sqrt{|\alpha'(\eta_1)|}}{\sqrt{|\alpha'(\eta_p)|}}\frac{\omega_1^{\alpha}}{\omega_p^{\alpha}}\frac{\eta_1-\zeta_s}{\eta_p-\zeta_s}t_{s,1}+\frac{\sqrt{|\alpha'(\eta_1)|}\sqrt{|\beta'(\zeta_1)|}}{\sqrt{|\alpha'(\eta_p)|}\sqrt{|\beta'(\zeta_s)|}}\frac{\omega_1^{\alpha}\omega_1^{\beta}}{\omega_p^{\alpha}\omega_s^{\beta}}\frac{\zeta_1-\eta_1}{\eta_p-\zeta_s}t_{1,1}\right.
\\&\phantom{=\frac1{\sqrt{|\alpha'(\eta_p)|}}}+\left.\frac{\sqrt{|\beta'(\zeta_1)|}}{\sqrt{|\beta'(\zeta_s)|}}\frac{\omega_1^{\beta}}{\omega_s^{\beta}}\frac{\eta_p-\zeta_1}{\eta_p-\zeta_s}t_{1,p}\right)
\end{split}
\end{equation} for all $1\leq p \leq m$ and $1\leq  s\leq n$;
\vspace{0.5cm}
\item[(b)] $l>0$, then $A\in \mathscr{T}(\alpha,\beta)$ if and only if
\begin{equation*}%\label{c5}
t_{s,p}=\left(\frac{\sqrt{|\alpha'(\eta_s)|}\sqrt{|\beta'(\zeta_1)|}}{\sqrt{|\alpha'(\eta_p)|}\sqrt{|\beta'(\zeta_s)|}}\frac{\omega_s^{\alpha}\omega_1^{\beta}}{\omega_p^{\alpha}\omega_s^{\beta}}\frac{\eta_1-\zeta_s}{\eta_p-\zeta_s}t_{1,s}+\frac{\sqrt{|\beta'(\zeta_1)|}}{\sqrt{|\beta'(\zeta_s)|}}\frac{\omega_1^{\beta}}{\omega_s^{\beta}}\frac{\eta_p-\zeta_1}{\eta_p-\zeta_s}t_{1,p}\right)
\end{equation*} for all $p,s$ such that $1\leq p \leq m$, $1\leq  s\leq l$, $s\neq p$, and
\begin{equation*}%\label{c6}
t_{s,p}=\left(\frac{\sqrt{|\alpha'(\eta_1)|}}{\sqrt{|\alpha'(\eta_p)|}}\frac{\omega_1^{\alpha}}{\omega_p^{\alpha}}\frac{\eta_1-\zeta_s}{\eta_p-\zeta_s}t_{s,1}+\frac{\sqrt{|\beta'(\zeta_1)|}}{\sqrt{|\beta'(\zeta_s)|}}\frac{\omega_1^{\beta}}{\omega_s^{\beta}}\frac{\eta_p-\zeta_1}{\eta_p-\zeta_s}t_{1,p}\right)
\end{equation*} for all  $p,s$ such that $1\leq p \leq m$, $l<  s\leq n$.
\end{itemize}
\end{thm}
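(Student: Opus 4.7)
The plan is to deduce this theorem directly from Theorem \ref{macierz3} by exploiting the fact that the modified Clark basis vectors differ from the (unmodified) Clark basis vectors only by unimodular scalar factors. More precisely, since $e_{\eta_p}^{\alpha}=\omega_p^{\alpha}v_{\eta_p}^{\alpha}$, $e_{\zeta_s}^{\beta}=\omega_s^{\beta}v_{\zeta_s}^{\beta}$, and $|\omega_p^{\alpha}|=|\omega_s^{\beta}|=1$, the entries of $\widetilde{M}_A=(t_{s,p})$ and of the Clark matrix $M_A=(r_{s,p})$ of the same operator $A\colon K_{\alpha}\to K_{\beta}$ satisfy
\begin{equation*}
t_{s,p}=\langle Ae_{\eta_p}^{\alpha},e_{\zeta_s}^{\beta}\rangle=\omega_p^{\alpha}\overline{\omega_s^{\beta}}\langle Av_{\eta_p}^{\alpha},v_{\zeta_s}^{\beta}\rangle=\frac{\omega_p^{\alpha}}{\omega_s^{\beta}}r_{s,p}.
\end{equation*}

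I would then substitute $r_{s,p}=(\omega_s^{\beta}/\omega_p^{\alpha})t_{s,p}$ together with the analogous expressions for $r_{s,1}$, $r_{1,1}$, $r_{1,p}$ (in case (a)) and for $r_{1,s}$, $r_{s,1}$, $r_{1,p}$ (in case (b)) into the formulas \eqref{c1}--\eqref{c3} of Theorem \ref{macierz3}, and multiply through by $\omega_p^{\alpha}/\omega_s^{\beta}$. The resulting expressions differ from the target formulas only in the scalar factors attached to each term, and matching them requires the elementary half-angle identity
\begin{equation*}
(\omega_j^{\alpha})^2=e^{-i(\arg\eta_j-\arg\lambda_1)}=\lambda_1\overline{\eta_j},
\end{equation*}
which yields $(\omega_p^{\alpha}/\omega_1^{\alpha})^2=\eta_1/\eta_p$, and in turn
\begin{equation*}
\frac{\eta_p}{\eta_1}\cdot\frac{\omega_p^{\alpha}}{\omega_1^{\alpha}}=\frac{\omega_1^{\alpha}}{\omega_p^{\alpha}}.
\end{equation*}
This is exactly the rewrite that converts the factor $\eta_p/\eta_1$ from Theorem \ref{macierz3}(a) into the factor $\omega_1^{\alpha}/\omega_p^{\alpha}$ demanded by \eqref{c4}; the same identity with index $1$ replaced by $s$ produces the middle-diagonal term in case (b), and a parallel identity for $\omega_j^{\beta}$ takes care of the $\beta$-side factors.

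The converse implication follows from the dimension argument that closes the proof of Theorem \ref{macierz3}: matrices satisfying the stated conditions form a linear space of dimension exactly $m+n-1$, and Proposition \ref{dym} tells us that this coincides with $\dim\mathscr{T}(\alpha,\beta)$, so the necessity computation forces equality of the two spaces. The main technical hurdle is therefore not conceptual but one of bookkeeping—one must respect the chosen half-angle branch in the definition of $\omega_j^{\alpha}$, $\omega_j^{\beta}$, and keep the $\alpha$-side and $\beta$-side factors disentangled (in particular, $\omega_j^{\alpha}\neq\omega_j^{\beta}$ in general even when $\eta_j=\zeta_j$, unless also $\lambda_1=\lambda_2$). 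Once the substitution is carried out carefully, cases (a), (b) for $s\leq l$, and (b) for $s>l$ match term-by-term with the corresponding cases of Theorem \ref{macierz3}.
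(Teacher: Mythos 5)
Your proposal is correct and is essentially the paper's own proof: the paper likewise passes to the Clark-basis matrix via $t_{s,p}=(\omega_p^{\alpha}/\omega_s^{\beta})r_{s,p}$, uses $\eta_p/\eta_1=(\omega_1^{\alpha}/\omega_p^{\alpha})^2$ to convert the factors, and concludes by the equivalence of the two matrix conditions together with Theorem \ref{macierz3}. The only cosmetic difference is that the paper derives the converse directly from this equivalence rather than re-invoking the dimension count.
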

\begin{proof}
We only give proof for $l=0$. Proof for $l>0$ is analogous.

Let $A$ be any linear transformation from $K_{\alpha}$ into $K_{\beta}$. If $\widetilde{M}_A=(t_{s,p})$ is the matrix representation of $A$ with respect to the bases $\{e_{\eta_1}^{\alpha},\ldots, e_{\eta_m}^{\alpha} \}$ and $\{e_{\zeta_1}^{\beta},\ldots, e_{\zeta_n}^{\beta} \}$, and $M_A=(r_{s,p})$ is the matrix representation of $A$ with respect to the bases $\{v_{\eta_1}^{\alpha},\ldots, v_{\eta_m}^{\alpha} \}$ and $\{v_{\zeta_1}^{\beta},\ldots, v_{\zeta_n}^{\beta} \}$, then
$$t_{s,p}=\langle Ae_{\eta_p}^{\alpha},e_{\zeta_s}^{\beta}\rangle=\omega_p^{\alpha}\overline{\omega}_s^{\beta}\langle Av_{\eta_p}^{\alpha},v_{\zeta_s}^{\beta}\rangle=\frac{\omega_p^{\alpha}}{\omega_s^{\beta}}r_{s,p}.$$

Recall that
$$\omega_{p}^{\alpha}=e^{-\frac{i}{2}(\mathrm{arg}\eta_p-\mathrm{arg}\lambda_1)}.$$
Hence
$$\frac{\eta_p}{\eta_1}=\frac{\lambda_1(\overline{\omega}_p^{\alpha})^2}{\lambda_1(\overline{\omega}_1^{\alpha})^2}=\left(\frac{\omega_1^{\alpha}}{\omega_p^{\alpha}}\right)^2.$$
It is now a matter of a simple computation to show that $\widetilde{M}_A$ satisfies \eqref{c4} if and only if $M_A$ satisfies \eqref{c1}. It then follows from Theorem \ref{macierz3}(a) that $A\in \mathcal{T}(\alpha,\beta)$ if and only if $\widetilde{M}_A$ satisfies \eqref{c4}.
\end{proof}

\end{document}